\newcounter{rmnum}
\newenvironment{romannum}{\begin{list}{{\upshape (\roman{rmnum})}}{\usecounter{rmnum}
\setlength{\leftmargin}{14pt}
\setlength{\rightmargin}{8pt}
\setlength{\itemsep}{2pt}
\setlength{\itemindent}{-1pt}
}}{\end{list}}
\newcounter{anum}
\newlength{\noteWidth}
\long\def\notes#1{\ifinner
             {\tiny #1}
             \else
              \marginpar{\parbox[t]{\noteWidth}{\raggedright\tiny #1}}
               \fi}
\def\IEEEQEDclosed{\mbox{\rule[0pt]{1.3ex}{1.3ex}}}
\def\qed{\ifmmode\IEEEQEDclosed\else{\unskip\nobreak\hfil
\penalty50\hskip1em\null\nobreak\hfil\IEEEQEDclosed
\parfillskip=0pt\finalhyphendemerits=0\endgraf}\fi}
\def\qed{\hspace*{\fill}~\IEEEQED\par\endtrivlist\unskip}
\def\Re{\mathbb{R}}
\def\R{\mathbb{R}}
\def\ind{\text{\rm\large 1}}
\def\varble{\,\cdot\,}
\def\varble{\,\cdot\,}
\def\Sec#1{Sec.~\ref{#1}}
\def\Fig#1{Fig.~\ref{#1}}
\def\notes#1{\marginpar{\tiny #1}\typeout{Notes!
Notes!
Notes!
}}
\renewcommand{\notes}[1]{\typeout{notes!}}
\def\FRAC#1#2#3{\genfrac{}{}{}{#1}{#2}{#3}}
\def\half{{\mathchoice{\FRAC{1}{1}{2}}%
{\FRAC{2}{1}{2}}%
{\FRAC{3}{1}{2}}%
{\FRAC{4}{1}{2}}}}
\def\ind{\bbbone}
\def\Re{\field{R}}
\def\k{{\sf K}}
\def\Sec#1{Sec.~\ref{#1}}
\def\clZ{{\cal Z}}
\def\Sec#1{Sec~\ref{#1}}
\def\E{{\sf E}}
\def\Prob{{\sf P}}
\def\R{\mathbb{R}}
\def\Fig#1{Fig.~\ref{#1}}
\def\Sec#1{Sec.~\ref{#1}}
\def\IEEEQEDclosed{\mbox{\rule[0pt]{1.3ex}{1.3ex}}}
\def\qed{\nobreak\hfill\IEEEQEDclosed}
\def\clZ{{\cal Z}}
\def\varble{\,\cdot\,}
\newtheorem{theorem}{Theorem}
\newtheorem{example}{Example}
\newtheorem{lemma}{Lemma}
\newtheorem{remark}{Remark}
\newtheorem{proposition}{Proposition}
\def\beq{\begin{eqnarray}} 
\def\bc{\begin{center}} 
\def\be{\begin{enumerate}}
\def\bi{\begin{itemize}} 
\def\bs{\begin{small}}
\def\bS{\begin{slide}}
\def\ec{\end{center}} 
\def\ee{\end{enumerate}}
\def\ei{\end{itemize}}
\def\es{\end{small}}
\def\eS{\end{slide}}
\def\eeq{\end{eqnarray}}
\newcommand{\newP}[1]{\medskip\noindent{\bf #1:}}
\newcommand{\PP}{{\sf P}}
\newcommand{\ud}{\,\mathrm{d}}
\def\Re{\mathbb{R}}
\def\E{{\sf E}}
\def\ind{\text{\rm\large 1}}
\def\varble{\,\cdot\,}
\def\Sec#1{Sec.~\ref{#1}}
\def\clZ{{\cal Z}}
\renewcommand{\Re}{\mathbb{R}}
\def\Prob{{\sf P}}
\def\FRAC#1#2#3{\genfrac{}{}{}{#1}{#2}{#3}}
\newcommand{\kepsnorm}{{k}^{(\epsilon)}}
\newcommand{\kepsnormN}{{k}^{(\epsilon,N)}}
\newcommand{\keps}{{k}^{(\epsilon)}}
\newcommand{\phiepsN}{{\phi}^{(\epsilon,N)}}
\newcommand{\phieps}{{\phi}^{(\epsilon)}}
\newcommand{\HH}{H^1_0}
\newcommand{\phiM}{\phi^{(M)}}
\newcommand{\phiMN}{\phi^{(M,N)}}
\newcommand{\rhoeps}{\rho^{(\epsilon)}}
\newcommand{\Teps}{{T}^{(\epsilon)}}
\newcommand{\TepsN}{{T}^{(\epsilon,N)}}
\newcommand{\mueps}{{\mu}^{(\epsilon)}}
\newcommand{\geps}{{g}^{(\epsilon)}}
\newcommand{\neps}{{n}^{(\epsilon)}}
\newcommand{\Ltwoeps}{{L^2_0(\mueps)}}
\newcommand{\FF}{C_0(\Omega)}
\newcommand{\infnorm}[1]{\|#1\|_\infty}
\newcommand{\PhiepsN}{{\Phi}^{(\epsilon,N)}}
\title{\LARGE \bf
Gain Function Approximation in the Feedback
Particle Filter}
\author{Amirhossein Taghvaei, Prashant G. Mehta 
\thanks{Financial support from the NSF CMMI grants 1334987 and 1462773 is gratefully acknowledged. 
}
\thanks{A.~Taghvaei and P.~G.~Mehta are with the Coordinated
  Science Laboratory and the Department of Mechanical Science and
  Engineering at the University of Illinois at Urbana-Champaign (UIUC).
{\tt\scriptsize taghvae2@illinois.edu; mehtapg@illinois.edu;}}
}
\begin{document}

\maketitle
\thispagestyle{empty}
\pagestyle{empty}

\begin{abstract}
This paper is concerned with numerical algorithms for gain function
approximation in the feedback particle filter.  The exact gain
function is the solution of a Poisson equation involving a probability-weighted
Laplacian.  The problem is to approximate this solution using {\em only} 
particles sampled from the probability distribution.  
Two algorithms are presented: a Galerkin algorithm and a kernel-based
algorithm.  Both the algorithms are adapted to the samples and
do not require approximation of the probability distribution as an intermediate step.  
The paper contains error analysis for the algorithms as
well as some comparative numerical results for a non-Gaussian
distribution.  These algorithms are also applied and illustrated for a
simple nonlinear filtering example.      

\end{abstract}
\section{Introduction}

This paper is concerned with algorithms for numerically approximating
the solution of a certain linear partial differential equation (pde) that arises in the problem of
nonlinear filtering.  In continuous time, the filtering problem
pertains to the following stochastic differential equations (sdes):     
\begin{subequations}
\begin{align}
\ud X_t &= a(X_t)\ud t + \ud B_t,
\label{eqn:Signal_Process}
\\
\ud Z_t &= h(X_t)\ud t + \ud W_t,
\label{eqn:Obs_Process}
\end{align}
\end{subequations}
where $X_t\in\Re^d$ is the (hidden) state at time $t$, $Z_t \in\Re$ is the
observation, and $\{B_t\}$, $\{W_t\}$ are two mutually independent
standard Wiener processes taking values in $\Re^d$ and $\Re$, respectively. The mappings
$a(\cdot): \Re^d \rightarrow \Re^d$ and $h(\cdot): \Re^d \rightarrow
\Re$ are $C^1$ functions.  
Unless noted otherwise, all probability distributions are assumed to
be absolutely continuous with respect to the Lebesgue measure, and
therefore will be identified with their densities.  The choice of
observation being scalar-valued ($Z_t\in\Re$) is made for notational ease.

The objective of the filtering problem is to estimate the
posterior distribution of $X_t$ given the time history of observations
(filtration) $\clZ_t :=
\sigma(Z_s:  0\le s \le t)$. The density of the posterior
distribution is denoted by $p^*$, so
that for any measurable set $A\subset \Re^d$,
\begin{equation}
\int_{x\in A} p^*(x,t)\, \ud x   = \Prob [X_t \in A\mid \clZ_t ].
\nonumber
\end{equation}
The filter is infinite-dimensional since it
defines the evolution, in  the space of probability measures,
of $\{p^*(\varble ,t) : t\ge 0\}$.  If $a(\varble)$, $h(\varble)$ are linear functions, the solution is given by the
finite-dimensional Kalman-Bucy filter.  The article~\cite{budchelee07} surveys numerical methods to
approximate the nonlinear filter. One approach described in this survey is particle filtering.

The particle filter is a simulation-based algorithm to approximate the
filtering task~\cite{smith2013}. The key step is the
construction of $N$ stochastic processes $\{X^i_t : 1\le i \le N\}$:
The value $X^i_t \in \Re^d$ is the state for the $i$-th
particle at time $t$. For each time $t$, the empirical distribution
formed by the particle population is used to approximate the
posterior distribution.  Recall that this is  defined for any measurable set $A\subset\Re^d$ by,
\begin{equation}
p^{(N)}(A,t) = \frac{1}{N}\sum_{i=1}^N \ind [ X^i_t\in A].\nonumber
\end{equation}
A common approach in particle filtering is called {\em
sequential importance sampling}, where particles are generated
according to their importance weight at every time
step~\cite{bain2009,smith2013}.

In our earlier papers~\cite{taoyang_acc11, taoyang_cdc11,
  taoyang_TAC12}, an alternative feedback control-based approach to
the construction of a particle filter was introduced. 
The resulting particle filter, referred to as the feedback particle
filter (FPF), is a controlled system.  The dynamics of the
$i$-th particle have the following gain feedback form,
\begin{equation}
\ud X^i_t = a(X^i_t) \ud t + \ud B^i_t + \k_t(X^i_t) \circ (\ud Z_t -
\frac{h(X^i_t) + \hat{h}_t}{2}\ud t),
\label{eqn:particle_filter_nonlin_intro}
\end{equation}  
where $\{B^i_t\}$ are mutually independent standard Wiener processes and
$\hat{h}_t := \E[h(X_t^i)|\mathcal{Z}_t]$. The initial condition $X^i_0$ 
is drawn from the initial density $p^*(x,0)$ of $X_0$ independent
of $\{B^i_t\}$.  Both  $\{B^i_t\}$ and $\{X^i_0\}$
are also assumed to be independent of $X_t,Z_t$.  The $\circ$
indicates that the sde is expressed in its Stratonovich form. 

The gain function $\k_t$ is obtained by solving a weighted Poisson
equation: For each fixed time $t$, the function $\phi$ is the solution to a Poisson equation,
\begin{flalign}
\label{eqn:EL_phi_intro}
\text{BVP} && &
\begin{aligned}
\nabla \cdot (p(x,t) \nabla \phi(x,t) ) & = - (h(x)-\hat{h}) p(x,t),\\
\int \phi(x,t) p(x,t) \ud x & = 0 \qquad \text{(zero-mean)},
\end{aligned} &
\end{flalign}
for all $x\in \Re^d$ where $\nabla$ and $\nabla \cdot $ denote the
gradient and the divergence operators, respectively, and
$p$ denotes the conditional density of $X_t^i$ given
$\mathcal{Z}_t$.  

In terms of the solution $\phi$, the gain function is given by,
\begin{equation*}
\k_t(x) = \nabla \phi(x,t)\, .
\label{eqn:gradient_gain_fn_intro}
\end{equation*}
Note that the gain function $\k_t$ is vector-valued (with dimension
$d\times 1$) and it needs to be obtained for each fixed time $t$.
For the linear Gaussian case, the gain function
is the Kalman gain.  For the general nonlinear non-Gaussian problem, the FPF~\eqref{eqn:particle_filter_nonlin_intro}
is exact, given an exact gain function and an exact initialization
$p(\varble,0)=p^*(\varble,0)$. Consequently, if the initial conditions
$\{X^i_0\}_{i=1}^N$  are drawn from the initial density
$p^*(\varble,0)$ of $X_0$, then, as $N\rightarrow\infty$, the
empirical distribution of the particle system approximates the
posterior density $p^*(\varble,t)$ for each $t$.

A numerical implementation of the
FPF~\eqref{eqn:particle_filter_nonlin_intro} requires a numerical
approximation of the gain function $\k_t$ and the mean $\hat{h}_t$ at
each time-step.  The mean is approximated empirically, $\hat{h}_t \approx
\frac{1}{N}\sum_{i=1}^N h(X_t^i) =: \hat{h}_t^{(N)}$.  The gain function
approximation - the focus of this paper - is a challenging problem because of two reasons:  i) Apart
from the linear Gaussian case, there are no known closed-form
solutions of~\eqref{eqn:EL_phi_intro}; ii)
The density $p(x,t)$ is not explicitly known.  At each time-step, one only has
samples $X^i_t$.  These are assumed to be i.i.d sampled from $p$.     
Apart from the FPF algorithm, solution of the Poisson equation is also
central to a number of other algorithms for nonlinear filtering~\cite{daum10,yang_discrete}.

In our prior work, we have obtained results on
existence, uniqueness and regularity of the solution to the Poisson
equation, based on certain weak formulation of the Poisson equation.
The weak formulation led to a Galerkin numerical algorithm.  
The main limitation of the Galerkin is that the algorithm requires a pre-defined set of basis
functions - which scales poorly with the dimension $d$ of the state.
The Galerkin algorithm can also exhibit certain numerical issues related to the
Gibb's phenomena.  This can in turn lead to numerical instabilities in
simulating the FPF.

The contributions of this paper are as follows:
We present a new basis-free kernel-based algorithm for approximating the
solution of the gain function.  The key step is to construct a Markov
matrix on a certain graph defined on the space of particles $\{X^i_t\}_{i=1}^N$.
The value of the function $\phi$ for the particles, $\phi(X^i_t)$, is then
approximated by solving a fixed-point problem involving
the Markov matrix.  The
fixed-point problem is shown to be a contraction and the method of
successive approximation applies to numerically obtain the solution. 

We present results on error analysis for both the Galerkin and
the kernel-based method.  These results are illustrated with the aid of an
example involving a multi-modal distribution.  Finally, the two
methods are compared for a filtering problem with a non-Gaussian
distribution.

In the remainder of this paper, we express the linear operator
in~\eqref{eqn:EL_phi_intro} as a weighted Laplacian $\Delta_\rho
\phi:=\frac{1}{\rho}\nabla \cdot (\rho\nabla \phi)$ where additional
assumptions on the density $\rho$ appear in the main body of the
paper.  In recent years, this operator and the associated Markov
semigroup have received considerable attention with several
applications including spectral clustering, dimensionality reduction,
supervised learning etc~\cite{coifman,hein2006}.  For a mathematical
treatment, see the monographs~\cite{grigor2006,bakry2013}.  Related specifically to
control theory, there are important connections with stochastic
stability of Markov operators~\cite{glynn96,meyn12}.

The outline of this paper is as follows: The mathematical
preliminaries appear in \Sec{sec:prelim}.  The Galerkin and the
kernel-based algorithms for the gain function approximation appear in
\Sec{sec:galerkin} and \Sec{sec:graph}, respectively.  The nonlinear
filtering example appears in \Sec{sec:numerics}.

\section{Mathematical Preliminaries}
\label{sec:prelim}

The Poisson equation~\eqref{eqn:EL_phi_intro} is expressed as,
\begin{flalign}
\label{eqn:EL_phi_prelim}
\text{BVP} && &
\begin{aligned}
-\Delta_\rho \phi & = h,\\
\int \phi \rho \ud x & = 0 \qquad \text{(zero-mean)},
\end{aligned} &
\end{flalign}
where $\rho$ is a probability density on $\Re^d$, $\Delta_\rho
\phi:=\frac{1}{\rho}\nabla \cdot (\rho\nabla \phi)$ and, without loss
of generality, it is assumed $\hat{h} = \int h \rho \ud x=0$.  

\medskip

\noindent\textbf{Problem statement:} Approximate
the solution $\phi(X^i)$ and $\nabla \phi(X^i)$ given $N$ independent
samples $X^i$ drawn from $\rho$.  The density $\rho$ is not explicitly
known.  

\medskip

For the problem to be well-posed requires definition of the function
spaces and additional assumptions on $\rho$ and $h$ enumerated
next: Throughout this paper, $\mu$ is an absolutely
continuous probability measure on $\Re^n$ with associated density $\rho$.
$L^2(\Re^d,\mu)$ is the Hilbert space of square integrable functions on
$\Re^d$ equipped with the inner-product,
\[
<\phi,\psi>:=\int\phi(x)\psi(x)\ud\mu(x).
\]
The associated norm is
denoted as $\|\phi\|^2_2:=<\phi,\phi>$. The space  
$H^1(\Re^d,\mu)$ is the
space of square integrable functions whose derivative (defined in the
weak sense) is in $L^2(\Re^d,\mu)$.  We use $L^2$ and $H^1$ to denote $L^2(\Re^d,\mu)$ and
$H^1(\Re^d,\mu)$, respectively.  For the zero-mean solution of interest, we
additionally define the co-dimension 1 subspace $L^2_0 :=\{\phi \in
L^2; \int \phi \ud \mu =0\}$ and $H^1_0 :=\{\phi \in
H^1; \int \phi \ud \mu =0\}$.  
$L^\infty$ is used to denote the
space of functions that are bounded a.e.\ (Lebesgue) and the sup-norm
of a function $\phi\in L^\infty$ is denoted as $\|\phi\|_{\infty}$. 


\medskip

The following assumptions are made throughout the paper:
\begin{romannum}
\item {\bf Assumption A1:} The probability density function $\rho$ is
  of the form $\rho(x)= e^{-(x-\mu)^T \Sigma^{-1} (x-\mu) - V(x)}$ where $\mu \in \Re^d$, $\Sigma$ is a
  positive-definite matrix and $V \in C^2$ with $V\in L^\infty$, and
  its derivatives $DV,D^2V \in
  L^\infty$. 
\item {\bf Assumption A2:} The function $h \in L^2$ and $\int h\ud\mu=0$. 
\end{romannum}
 
Under assumption A1, the density $\rho$ admits a spectral gap (or
Poincar\'e inequality)\cite{bakry2013}, i.e $\exists \;\lambda_1 >0$ such that,
\begin{equation}
\int \phi^2\rho \ud x \leq \frac{1}{\lambda_1}\int |\nabla \phi|^2 \rho \ud x, \quad 
\forall \phi \in \HH.
\label{eq:poincare}
\end{equation} 
Furthermore, the spectrum is known to be discrete with an ordered sequence of
eigenvalues $0=\lambda_0<\lambda_1\le\lambda_2\le\hdots$ and
associated eigenfunctions $\{e_n\}$ that form a complete orthonormal
basis of $L^2$ [Corollary~4.10.9 in ~\cite{bakry2013}].  The
trivial eigenvalue $\lambda_0=1$ with associated eigenfunction $e_0=1$.
On the subspace of zero-mean functions, the spectral decomposition
yields: For $\phi\in L^2_0$,
\begin{equation}
\label{eq:spectral_rep}
-\Delta_\rho \phi = \sum_{m=1}^\infty \lambda_m <e_m,\phi>e_m.
\end{equation}
The spectral gap condition~\eqref{eq:poincare} implies that
$\lambda_1>0$.  Consequently, the semigroup
\begin{equation}
e^{t\Delta_\rho}\phi := \sum_{m=1}^{\infty} e^{-t\lambda_m}<e_m,\phi>e_m
\label{eq:semigroup}
\end{equation} 
is a strict contraction on the subspace $L^2_0$.  It is also easy to see that
$\mu$ is an invariant measure and $\int e^{t\Delta_\rho}\phi(x)
\ud\mu(x) = \int \phi(x) \ud\mu(x) = 0$ for all $\phi\in L^2_0$. 

\medskip

\begin{example}
If the density $\rho$ is Gaussian with mean $\mu\in\mathbb{R}^d$ and a
positive-definite covariance matrix $\Sigma$, the spectral gap
constant (1/$\lambda_1$) equals the largest eigenvalue of $\Sigma$.  The
eigenfunctions are the Hermite functions.  Given a linear function
$h(x) = H \cdot x$, the unique solution of the BVP~\eqref{eqn:EL_phi_prelim} is given by,
\begin{equation*}
\phi(x) = (\Sigma H) \cdot (x - \mu),
\end{equation*}
where $\cdot$ denotes the vector dot
product in $\mathbb{R}^d$. 
In this case, $\k = \nabla\phi = \Sigma H$ is the Kalman gain. \qed


\end{example}
\medskip

\begin{example} In the scalar case (where $d=1$), the Poisson equation
  is:
\begin{equation*}
-\frac{1}{\rho(x)}\frac{\ud }{\ud x}(\rho(x) \frac{\ud \phi}{\ud
  x}(x)) = h.
\end{equation*}
Integrating twice yields the solution explicitly,
\begin{equation}
\begin{aligned}
\frac{\ud \phi}{\ud x}(x) &= -\frac{1}{\rho(x)}\int_{-\infty}^x \rho(z)h(z)\ud z  \\
\phi(x) &= -\int_{-\infty}^{x}\frac{\ud y}{\rho(y)}\int_{-\infty}^y \rho(z)h(z)\ud z  
\end{aligned}
\label{eq:scalar}
\end{equation}

For the particular choice of $\rho$ as the sum of two Gaussians
$N(-1,\sigma^2)$ and $N(+1,\sigma^2)$ with $\sigma=0.4$ and $h(x)=x$, the solution
obtained using~\eqref{eq:scalar} is depicted in \Fig{fig:truesol}.
Since $\frac{\ud h}{\ud x}>0$, the positivity of the gain function $\frac{\ud \phi}{\ud
  x}(x)$ follows from the maximum principle for elliptic pdes~\cite{EVANS_PDE}. 
\qed

\begin{figure}
\centering
\includegraphics[width=\columnwidth]{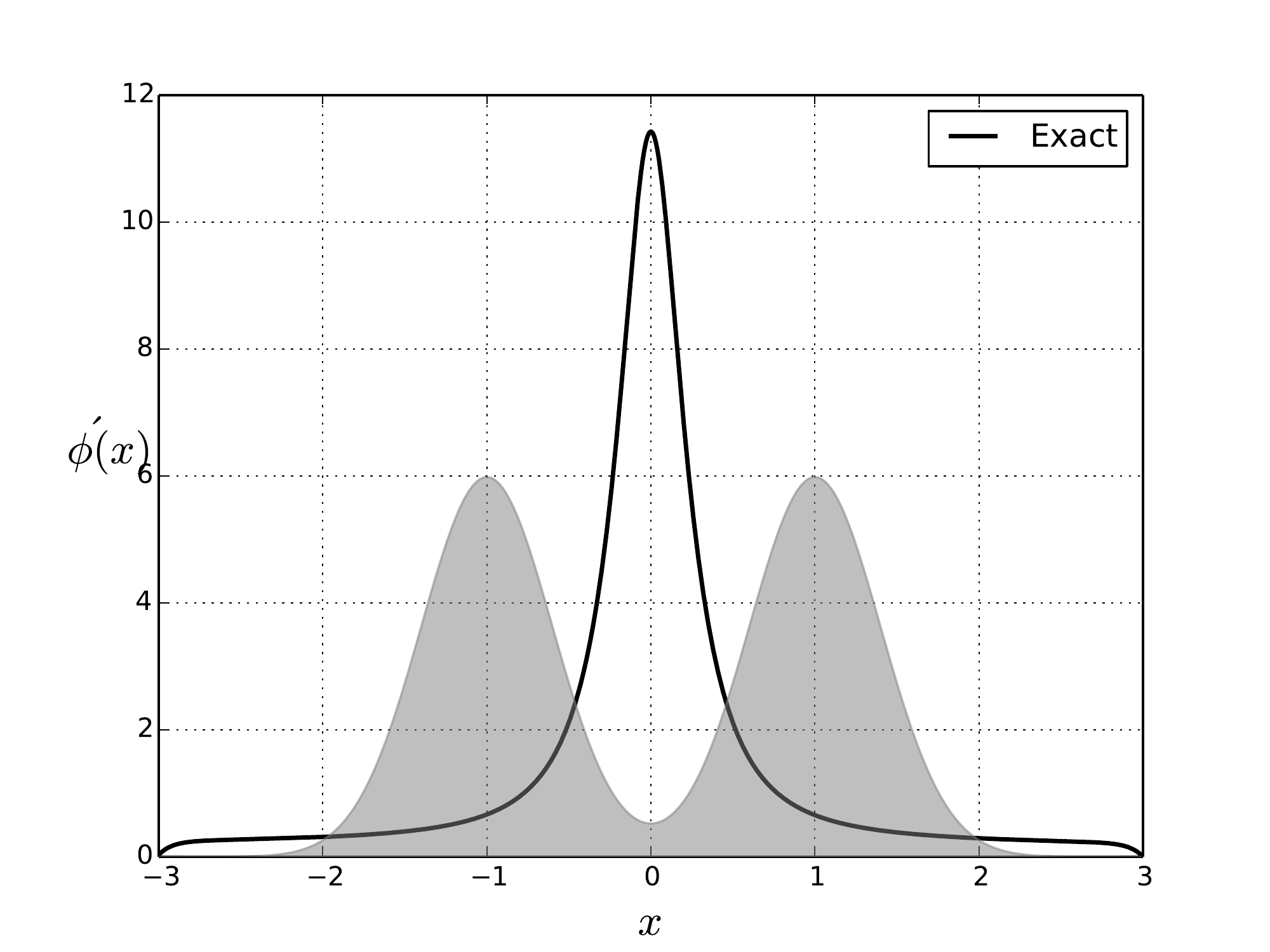}
\caption{The exact solution to the Poisson equation using the
  formula~\eqref{eq:scalar}.  The density $\rho$ is the sum of two Gaussians
$N(-1,\sigma^2)$ and $N(+1,\sigma^2)$, and $h(x)=x$.  The density is depicted as the shaded curve in the background.}

\label{fig:truesol}
\end{figure}
  
\label{example:truesol}
\end{example}
\section{Galerkin for Gain Function Approximation}
\label{sec:galerkin}

\newP{Weak formulation} A function $\phi \in H_0^1$ is said to be a weak solution
of Poisson's equation~\eqref{eqn:EL_phi_prelim} if
\begin{equation}
<\nabla \phi,\nabla \psi> = <h,\psi>, \quad
\forall\; \psi\in H_0^1.\label{eqn:EL_phi_weak}
\end{equation}
It is shown in~\cite{poisson15} that, under
Assumptions~(A1)-(A2), there exists a unique weak solution of the
Poisson equation.

The Galerkin approximation involves
solving~\eqref{eqn:EL_phi_weak} in a finite-dimensional subspace
$S\subset H^1_0(\R^d;\rho)$.  The solution $\phi$ is approximated as,
\begin{equation}
\phiM(x) = \sum_{m=1}^M c_m \psi_m(x),
\label{eq:phiM}
\end{equation}   
where $\{\psi_m(x) \}_{m=1}^M$ are a {\em given} set of basis
functions.

The finite-dimensional approximation of~\eqref{eqn:EL_phi_weak} is to choose
constants $\{c_m\}_{m=1}^M$ such that
\begin{equation}
<\nabla \phi^{(M)},\nabla \psi> = <h,\psi>, \;\;
\forall \; \psi\in S,
\label{eqn:BVP_phi_weak_expect_fd}
\end{equation}
where $S:=\text{span}\{\psi_1,\hdots,\psi_M\}\subset H^1_0$.

Denoting $[A]_{ml} = <\nabla \psi_l \cdot \nabla \psi_m>$, $b_m
= <h, \psi_m>$, and $c=(c_1,c_2,\hdots,c_M)^T$, the finite-dimensional
approximation~\eqref{eqn:BVP_phi_weak_expect_fd} is expressed as a linear matrix
equation:
\begin{equation}
    Ac= b.\label{eqn:linmatrixeqn}
\end{equation}

In a numerical implementation, the matrix $A$ and vector $b$ are approximated as,
\begin{align}
    [A]_{ml} & = <\nabla \psi_l \cdot \nabla \psi_m> \approx
    \frac{1}{N} \sum_{i=1}^N \nabla \psi_l (X^i) \cdot
   \nabla \psi_m (X^i)=: [A]_{ml}^{(N)}, 
   \label{formula:A_ml_sample} \\
    b_m & = <h, \psi_m> \approx \frac{1}{N} \sum_{i=1}^N
    h(X^i) \psi_m(X^i)=:b_m^{(N)}. 
   \label{formula:b_m_sample}
\end{align}
The resulting solution of the matrix
equation~\eqref{eqn:linmatrixeqn}, with $A=A^{(N)}$ and $b=b^{(N)}$,
is denoted as $c^{(N)}$,
\begin{equation}
    A^{(N)} c^{(N)}= b^{(N)}.\label{eqn:linmatrixeqnN}
\end{equation}
Using~\eqref{eq:phiM}, we obtain the
particle-based approximation of the solution:
\begin{equation}
\phi^{(M,N)}(x) = \sum_{m=1}^M c_m^{(N)} \psi_m(x).
\label{eq:phiMN}
\end{equation}  
In terms of this solution, the gain function is obtained as,
\[
\nabla\phi^{M,N}(x) = \sum_{m=1}^M c_m^{(N)} \nabla \psi_m(x).
\]

\newP{Convergence analysis}  The following is a summary of the 
approximations in the Galerkin algorithm:
\begin{align*}
\text{Exact}: &~& <\nabla \phi,\nabla \psi> &= <h,\psi>, \quad
\forall \psi\in H_0^1\\
\text{Galerkin approx:} &~& <\nabla \phi^{(M)},\nabla \psi> &= <h,\psi>, \;
\forall \psi\in S\subset H_0^1
\end{align*}
Empirical approximation:
\begin{align*}
\frac{1}{N} \sum_{i=1}^N \nabla \phi^{(M,N)}(X^i)
\cdot \nabla \psi(X^i) = \frac{1}{N} \sum_{i=1}^N h(X^i)\psi(X^i),\;\;
\forall \psi\in S
\end{align*}
We are interested in the error analysis of these approximations as a
function of both $M$ and $N$.  Note that the approximation error
$\phi^{(M)}-\phi^{(M,N)}$ is random because $X^i$ are sampled randomly
from the probability distribution $\mu$.  The following Proposition
provides error bounds for the case where the basis functions are the
eigenfunctions of the Laplacian.  The proof appears in the Appendix~\ref{app:app1}.

\begin{proposition} Consider the empirical Galerkin approximation of the Poisson
  equation~\eqref{eqn:EL_phi_weak} on the space
  $S:=\text{span}\{e_1,e_2,\hdots,e_M\}$ of the first $M$
  eigenfunctions.  Fix $M<\infty$.  Then the
  solution for the matrix equation~\eqref{eqn:linmatrixeqnN} exists with probability
  approaching 1 as $N\rightarrow\infty$.
  And there is a sequence of random variables $\{\epsilon_N\}$ such that  
\begin{equation}
\|\phiMN - \phi\|_2\le
\frac{1}{\lambda_M^2}\|h\|^2_{2} + \epsilon_N,
\end{equation}
where $\epsilon_N\rightarrow 0$ as $N\rightarrow\infty$ a.s.
\qed
\label{prop:galerkin}
\end{proposition}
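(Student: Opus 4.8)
The plan is to exploit the fact that the basis consists of eigenfunctions of $\Delta_\rho$, which diagonalizes the Galerkin system and makes both the exact solution and the exact (population) Galerkin solution completely explicit. The only genuinely random object is then the empirical coefficient vector $c^{(N)}$, which I would control with the strong law of large numbers (SLLN).

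First I would write down the two deterministic solutions. Since $h\in L^2_0$, expand $h=\sum_{m=1}^\infty <e_m,h>e_m$; by the spectral representation~\eqref{eq:spectral_rep} the exact weak solution of~\eqref{eqn:EL_phi_weak} is $\phi=\sum_{m=1}^\infty \lambda_m^{-1}<e_m,h>e_m$. For the Galerkin problem on $S=\text{span}\{e_1,\dots,e_M\}$, the weak eigenvalue relation $<\nabla e_l,\nabla e_m>=\lambda_l<e_l,e_m>=\lambda_l\delta_{lm}$ shows that the stiffness matrix is $A=\text{diag}(\lambda_1,\dots,\lambda_M)$ and that $b_m=<e_m,h>$, so $\phiM=\sum_{m=1}^M \lambda_m^{-1}<e_m,h>e_m$ is exactly the level-$M$ truncation of $\phi$. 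Orthonormality then yields the deterministic truncation error
\[
\|\phiM-\phi\|_2^2=\sum_{m=M+1}^\infty \frac{|<e_m,h>|^2}{\lambda_m^2}\le \frac{1}{\lambda_M^2}\sum_{m=M+1}^\infty |<e_m,h>|^2\le \frac{1}{\lambda_M^2}\|h\|_2^2,
\]
which is the first term of the stated bound, read with the squared norm on the left-hand side.

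Next I would treat the empirical part. The entries $[A^{(N)}]_{ml}$ and $b_m^{(N)}$ in~\eqref{formula:A_ml_sample}--\eqref{formula:b_m_sample} are empirical averages of the fixed functions $\nabla e_l\cdot\nabla e_m$ and $h\,e_m$; these are integrable against $\mu$ by Cauchy--Schwarz, since $\|\nabla e_m\|_2^2=\lambda_m$ (again the weak eigenvalue relation) and $\|e_m\|_2=1$, giving $\int|\nabla e_l\cdot\nabla e_m|\ud\mu\le\sqrt{\lambda_l\lambda_m}$ and $\int |h\,e_m|\ud\mu\le\|h\|_2$. Because $\{X^i\}$ are i.i.d.\ $\sim\mu$ and $M$ is finite, the SLLN gives $A^{(N)}\to A$ and $b^{(N)}\to b$ entrywise a.s. As $A$ is diagonal with strictly positive entries $\lambda_1,\dots,\lambda_M>0$, continuity of the determinant yields $\det A^{(N)}\to\det A\neq 0$ a.s.; hence $A^{(N)}$ is invertible --- so $c^{(N)}$ exists --- on an event whose probability tends to $1$, and for almost every sample path this holds for all large $N$. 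On that event, continuity of matrix inversion gives $c^{(N)}=(A^{(N)})^{-1}b^{(N)}\to A^{-1}b=c$ a.s., and therefore $\|\phiMN-\phiM\|_2^2=\sum_{m=1}^M (c_m^{(N)}-c_m)^2\to 0$ a.s.

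Finally I would combine the two estimates by expanding
\[
\|\phiMN-\phi\|_2^2=\|\phiM-\phi\|_2^2+2<\phiMN-\phiM,\phiM-\phi>+\|\phiMN-\phiM\|_2^2,
\]
and setting $\epsilon_N$ equal to the last two terms; Cauchy--Schwarz bounds the cross term by $\|\phiMN-\phiM\|_2\,\|\phiM-\phi\|_2$, so $\epsilon_N\to 0$ a.s., and together with the truncation bound this gives the claim. The main obstacle I anticipate is purely probabilistic bookkeeping: $c^{(N)}$, and hence $\epsilon_N$, is defined only once $A^{(N)}$ is nonsingular, which for fixed $N$ holds merely with probability tending to $1$; the care lies in arguing that along almost every sample path $A^{(N)}$ is in fact nonsingular for all sufficiently large $N$, so that the statement $\epsilon_N\to 0$ a.s.\ is well posed. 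Verifying the integrability needed for the SLLN is routine via the identity $\|\nabla e_m\|_2^2=\lambda_m$, and all remaining steps reduce to orthonormality of the eigenbasis.
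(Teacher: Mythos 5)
Your proposal is correct and follows essentially the same route as the paper: explicit spectral formulas for $\phi$ and $\phi^{(M)}$ giving the truncation bound $\|\phi-\phi^{(M)}\|_2^2\le\lambda_M^{-2}\|h\|_2^2$, then the SLLN applied to the entries of $A^{(N)}$ and $b^{(N)}$ together with invertibility of $A=\text{diag}(\lambda_1,\dots,\lambda_M)$ to get $c^{(N)}\to c$ a.s. The only differences are cosmetic and in your favor --- you expand the squared norm instead of using the triangle inequality (thereby resolving the squared/unsquared mismatch between the left- and right-hand sides of the stated bound, which the paper's own proof leaves implicit), and you explicitly check the integrability needed for the SLLN and the a.s.\ eventual nonsingularity of $A^{(N)}$.
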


\medskip

\begin{remark}
In practice, the eigenfunctions of the Laplacian are not known.  The
basis functions are typically picked from the polynomial family, e.g.,
the Hermite functions.  In this case, the bounds will provide
qualitative assessment of the error provided the eigenfunctions
associated with the first $M$ eigenvalues are `approximately' in $S$.
Quantitatively, the additional error may be bounded in terms of the
projection error between the eigenfunction and the projection onto
$S$.      \qed
\end{remark}

\medskip
\begin{example}
We next revisit the bimodal distribution first introduced in the
Example~\ref{example:truesol}. \Fig{fig:galerkin} depicts the
empirical Galerkin approximation, with $N=200$ samples, of the gain
function.  The basis functions are from the polynomial family -
$\{x,x^2,\hdots,x^M\}$ - and the figure depicts the gain functions with
$M=1,3,5$ modes.  The $(M=1)$ case is referred to as the {\em constant-gain
approximation} where,
\[
\frac{\ud\phi}{\ud x}(x) = \int h(x) \, x \; \ud \mu(x) = (\text{const.})
\]  
For the linear Gaussian case, the (const.) is the Kalman gain.  The
plots included in the Figure demonstrate the Gibb's phenomena as more
and more modes are included in the Galerkin.  Particularly concerning
is the change in the sign of the gain which leads to negative
values of gain for some particles contradicting the positivity
property of the gain described in Example~\ref{example:truesol}. 
As discussed in the filtering example in \Sec{sec:numerics}, the
negative values of the gain can cause numerical issues and lead to
erroneous results for the filter. 
\qed
\label{example:Galerkin}
\end{example}

\begin{figure}
\centering
\begin{tabular}{c}
\subfigure{
\includegraphics[width=\columnwidth]{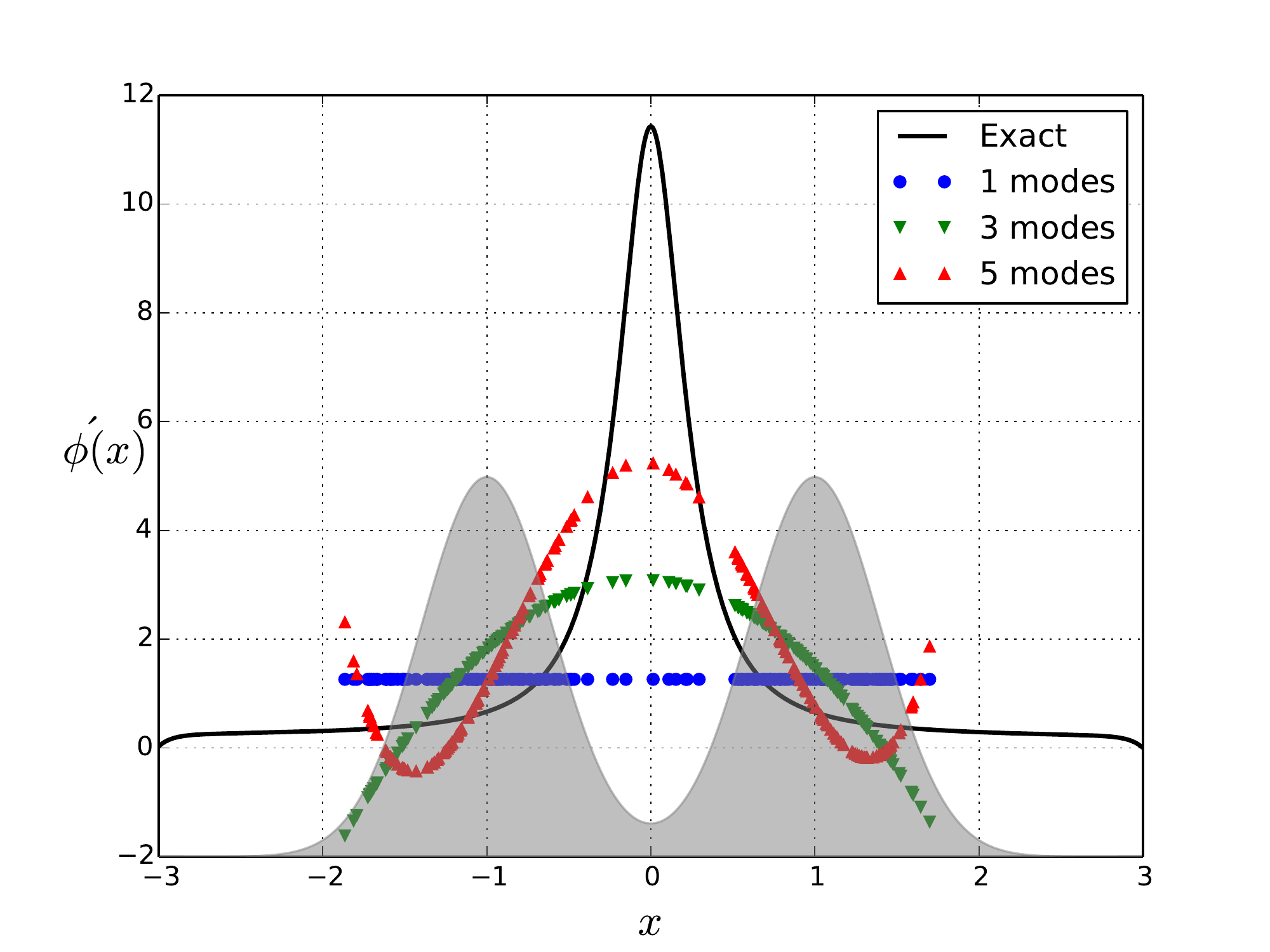}
}
\end{tabular}
\caption{Comparison of the exact solution and its empirical Galerkin
  approximation with $M=1,3,5$ modes and $N=200$ particles.  The density is depicted as the shaded curve in the background.}
\label{fig:galerkin}
\end{figure}

\section{Semigroup Approximation of the Gain }
\label{sec:graph}

\newP{Semigroup formulation} The semigroup
formula~\eqref{eq:semigroup} is used to construct the solution of the
Poisson equation by solving the following fixed-point equation for
{\em any} fixed positive value of $t$:
\begin{equation}
\phi = e^{t\Delta_\rho} \phi+ \int_0^t e^{s\Delta_\rho} h \ud s.
\label{eq:fixed1}
\end{equation}
A unique solution exists because $e^{t\Delta_\rho}$ is a contraction
on $L^2_0$.  

The approximation proposed in this section involves approximating the
semigroup as a perturbed integral operator, for small positive values of
$t=\epsilon$.  The following approximation
of the semigroup appears in~\cite{coifman,hein2006}:
\begin{equation}
\Teps \phi (x) := \frac{\int \kepsnorm(x,y)\phi(y) \ud\mu(y)}{\int \kepsnorm(x,y)\ud\mu(y)},
\label{eq:Teps}
\end{equation}
where 
$\kepsnorm(x,y) := \frac{\geps(x,y)}{\sqrt{\int \geps(x,y)
    \ud\mu(y)}\sqrt{\int \geps(x,y) \ud\mu(x)}}$ and
$\geps(x,y):=\frac{1}{(4\pi\epsilon)^\frac{d}{2}}\exp{(-\frac{|x-y|^2}{4\epsilon})}$
is the Gaussian kernel in $\mathbb{R}^d$.

In terms of the perturbed integral operator, the fixed-point
equation~\eqref{eq:fixed1} becomes,
\begin{equation}
\phieps = \Teps \phieps+ \int_0^\epsilon T^{(s)} h \ud s.
\label{eq:fixed2}
\end{equation}
The superscript $\epsilon$ is used to distinguish the approximate
($\epsilon$-dependent) solution from the exact solution $\phi$.  As
shown in the Appendix~\ref{app:app2}, $\Teps$ has an ergodic invariant measure
$\mueps$ which approximates $\mu$ as $\epsilon\downarrow 0$.
For any fixed positive $\epsilon$, we are interested in solutions that
are zero-mean with respect to this measure.       
The existence-uniqueness result for this solution is
described next; the proof appears in the Appendix~\ref{app:app2}.  

\medskip

\begin{proposition}
Consider the fixed-point problem~\eqref{eq:fixed2} with the perturbed
operator $\Teps$ defined according to~\eqref{eq:Teps}.  Fix
$\epsilon>0$.
Then there exists a unique solution $\phieps$ such that
$\int\phieps\ud\mueps=0$.  
\qed
\label{prop:Teps}
\end{proposition}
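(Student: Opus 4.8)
The plan is to realize the fixed-point map $\phi\mapsto\Teps\phi+\int_0^\epsilon T^{(s)}h\ud s$ as an affine map on the Hilbert space $\Ltwoeps$ and to invoke the Banach fixed-point theorem, so that the whole argument reduces to showing that $\Teps$ is a strict contraction on the zero-mean subspace $\Ltwoeps$. First I would record the structural properties of $\Teps$ established in Appendix~\ref{app:app2}: the normalized kernel $\kepsnorm(x,y)$ is \emph{symmetric}, since it is the symmetric Gaussian kernel $\geps$ divided by the same degree factor in each variable, so with respect to its invariant measure $\mueps$ the Markov operator $\Teps$ is self-adjoint, satisfies $\Teps 1=1$, and leaves $\Ltwoeps$ invariant (if $\int\phi\ud\mueps=0$ then $\int\Teps\phi\ud\mueps=\langle\phi,\Teps 1\rangle_{\mueps}=\int\phi\ud\mueps=0$). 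Because $\geps>0$ pointwise, the kernel $\kepsnorm$ is strictly positive, so $\Teps$ is positivity improving; by a Perron--Frobenius/Jentzsch-type argument the top eigenvalue $1$ is simple, with the constants as its only eigenfunctions.

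Next I would convert self-adjointness together with simplicity of the eigenvalue $1$ into a contraction estimate. Since $\Teps$ is a self-adjoint Markov operator, its spectrum lies in $[-1,1]$ and the operator norm of its restriction to $\Ltwoeps$ equals the spectral radius there. Using the positivity-improving property together with a spectral-gap (Poincar\'e-type) estimate for the perturbed operator at fixed $\epsilon$, I would show $\text{spec}(\Teps|_{\Ltwoeps})\subset[-1,1-\delta]$ for some $\delta=\delta(\epsilon)>0$, hence $\|\Teps\phi\|_{L^2(\mueps)}\le(1-\delta)\|\phi\|_{L^2(\mueps)}$ for every $\phi\in\Ltwoeps$. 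With this contraction in hand, $(I-\Teps)$ is boundedly invertible on $\Ltwoeps$ through the Neumann series $\sum_{k\ge0}\Teps^{k}$, which simultaneously delivers uniqueness (any zero-mean solution of the homogeneous equation is a constant, hence $0$) and justifies the method of successive approximation announced in the introduction.

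It then remains to place the forcing term $f:=\int_0^\epsilon T^{(s)}h\ud s$ in $\Ltwoeps$, so that the affine map $\phi\mapsto\Teps\phi+f$ maps $\Ltwoeps$ into itself. Integrating the fixed-point equation against $\mueps$ and using invariance of $\mueps$ forces $\int f\ud\mueps=0$ as a compatibility condition; I would verify this directly from the definitions of $T^{(s)}$ and $\mueps$, or equivalently replace $f$ by its zero-mean part $f-\int f\ud\mueps$, which only shifts the solution by an additive constant and hence does not affect the zero-mean representative we single out. Once $f\in\Ltwoeps$ is confirmed, the contraction $\phi\mapsto\Teps\phi+f$ on the complete space $\Ltwoeps$ has a unique fixed point $\phieps$, which is by construction the unique solution of~\eqref{eq:fixed2} with $\int\phieps\ud\mueps=0$.

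The step I expect to be the main obstacle is the spectral gap for the perturbed operator on the non-compact domain $\Re^d$: self-adjointness and strict positivity of the kernel give simplicity of the eigenvalue $1$ for free, but turning this into a \emph{quantitative} contraction $\|\Teps|_{\Ltwoeps}\|<1$ requires controlling the bottom of the spectrum uniformly. For this I would lean on the Gaussian-dominated tails and the bounded perturbation $V$ guaranteed by Assumption~A1, and, if needed, a compactness argument showing that $\Teps$ is Hilbert--Schmidt on $L^2(\mueps)$ so that $1$ is an isolated eigenvalue. Verifying the mean-zero compatibility of $f$ is a secondary but necessary bookkeeping step.
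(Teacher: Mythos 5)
Your strategy coincides with the paper's: show that $\Teps$ is a self-adjoint, compact Markov operator with invariant measure $\mueps$, prove that its restriction to the zero-mean subspace $\Ltwoeps$ is a strict contraction, and conclude via the Banach fixed-point theorem / Neumann series. The ingredients you identify (symmetry and strict positivity of $\kepsnorm$, Hilbert--Schmidt compactness from the Gaussian tail bounds of Assumption~A1) are exactly those of Lemma~\ref{lemma:Teps}. One step, however, does not follow as written: for a self-adjoint operator the norm of the restriction equals $\sup\{|\lambda| : \lambda \in \mathrm{spec}(\Teps|_{\Ltwoeps})\}$, so the inclusion $\mathrm{spec}(\Teps|_{\Ltwoeps}) \subset [-1,1-\delta]$ that you aim to prove does \emph{not} give $\|\Teps|_{\Ltwoeps}\| \le 1-\delta$; spectrum at or near $-1$ would keep the norm equal to $1$, and neither the Perron--Frobenius/Jentzsch simplicity of the eigenvalue $1$ nor a gap at the top of the spectrum excludes this. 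You must also bound the spectrum away from $-1$. Here that is easy but has to be said: unwinding the definitions,
\begin{equation*}
\langle \Teps f, f\rangle_{L^2(\mueps)} \;=\; \int\!\!\int \geps(x,y)\,
\frac{f(x)\rho(x)}{\sqrt{\rhoeps(x)}}\,
\frac{f(y)\rho(y)}{\sqrt{\rhoeps(y)}}\; \ud x \,\ud y \;\ge\; 0,
\end{equation*}
since the Gaussian $\geps$ is a kernel of positive type, so $\Teps$ is positive semi-definite on $L^2(\mueps)$ and its spectrum lies in $[0,1]$. The paper sidesteps this bookkeeping entirely with the elementary inequality $\half\|f\|^2 + \half\|g\|^2 - \langle f, \Teps g\rangle \ge 0$, with equality iff $f$ and $g$ are the same constant $\mueps$-a.e.; taking $g=\Teps f$ gives $\|\Teps f\| \le \|f\|$ with strict inequality on $\Ltwoeps\setminus\{0\}$, and compactness upgrades this vector-wise strictness to $\|\Teps|_{\Ltwoeps}\| < 1$.

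On the other hand, your insistence on verifying the compatibility condition $\int \bigl(\int_0^\epsilon T^{(s)} h \,\ud s\bigr) \ud\mueps = 0$ is a genuine point that the paper's proof passes over silently: integrating the fixed-point equation against $\mueps$ and using invariance shows a solution can exist only if the forcing is $\mueps$-mean-zero, and this does not follow automatically from $\int h \,\ud\mu = 0$ because $T^{(s)}$ preserves $\mu^{(s)}$ rather than $\mueps$. Either checking this directly or projecting the forcing onto $\Ltwoeps$, as you propose, is needed to make the affine map a self-map of the complete space on which the contraction argument runs.
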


\medskip

In a numerical implementation, the solution $\phi^{\epsilon}$ is
approximated directly for the particles:
\begin{align*}
\Phi^{(\epsilon,N)}&:=(\phieps(X^1),\phieps (X^2),\cdots,\phieps (X^N)).
\end{align*}
The integral
operator $\Teps$ is approximated as a $N\times N$ Markov matrix whose $(i,j)$
entry is obtained empirically as,
\begin{equation}
\TepsN_{i,j} = \frac{\kepsnormN(X^i,X^j)}{\sum_{l=1}^N\kepsnormN(X^i,X^l)},
\label{eq:TepsN}
\end{equation}
where $\kepsnormN(x,y) =
\frac{g^\epsilon(x,y)}{\sqrt{\frac{1}{N}\sum_{l=1}^N
    g^\epsilon(x,X^l)}\sqrt{\frac{1}{N}\sum_{l=1}^N g^\epsilon(y,X^l)}}$.

\medskip

The resulting finite-dimensional fixed-point equation is given by,
\begin{equation}
\Phi^{(\epsilon,N)} = \TepsN \Phi^{(\epsilon,N)} + \int_0^{\epsilon}T^{(s,N)}H^{(N)} \ud s,
\label{eq:fixed-dis}
\end{equation}
where
$\Phi^{(\epsilon,N)}=(\Phi^{(\epsilon,N)}_1,\Phi^{(\epsilon,N)}_2,\hdots,\Phi^{(\epsilon,N)}_N)\in\Re^N$
is the vector-valued solution,
$H^{(N)}=(h(X^1),h(X^2),\cdots,h(X^N)) \in\Re^N$, and
$\TepsN$ and $T^{(s,N)}$ are $N\times N$ matrices defined according
to~\eqref{eq:TepsN}. 
The existence-uniqueness result for the zero-mean solution of the
finite-dimensional equation~\eqref{eq:fixed-dis} is
described next; its proof is given in the Appendix~\ref{app:finite_N}.  The zero-mean
property is the finite-dimensional counterpart of the zero-mean
condition $\int\phi\ud\mu=0$ for the original problem and
$\int\phi\ud\mu^\epsilon=0$ for the perturbed problem.

\medskip

\begin{proposition}
Consider the fixed-point problem~\eqref{eq:fixed-dis} with the matrix
$\TepsN$ defined according to~\eqref{eq:TepsN}.  Then, with
probability $1$, there exists a unique zero-mean solution
$\Phi^{(\epsilon,N)}\in\mathbb{R}^d$. 
\qed
\label{prop:TepsN}
\end{proposition}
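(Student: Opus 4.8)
The plan is to recast the fixed-point equation~\eqref{eq:fixed-dis} as the linear system $(\mathbb{I}-\TepsN)\Phi^{(\epsilon,N)} = b$ with source $b := \int_0^\epsilon T^{(s,N)}H^{(N)}\ud s$, and to analyze the $N\times N$ matrix $\mathbb{I}-\TepsN$ through Perron--Frobenius theory. First I would record the structural properties of $\TepsN$ on which everything rests. Since the samples $X^i$ are drawn from the absolutely continuous measure $\mu$, they are almost surely distinct, so each Gaussian factor $g^\epsilon(X^i,X^j)>0$ and hence every weight $\kepsnormN(X^i,X^j)$ is strictly positive; by~\eqref{eq:TepsN}, $\TepsN$ is therefore a row-stochastic matrix with strictly positive entries, i.e.\ a primitive Markov matrix, with probability one. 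Two consequences follow: (i) by Perron--Frobenius its spectral radius is $1$, the eigenvalue $1$ is algebraically simple with right eigenvector $\mathbf{1}=(1,\dots,1)^\transpose$ and a unique strictly positive left eigenvector $\pi$ (the stationary distribution, normalized so that $\sum_i\pi_i=1$), and every other eigenvalue has modulus strictly below $1$; (ii) because $\kepsnormN$ is symmetric in its two arguments, $\TepsN$ satisfies detailed balance with respect to $\pi$, hence is self-adjoint for the weighted inner product $\langle u,v\rangle_\pi:=\sum_i\pi_i u_i v_i$.

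Next I would exploit the natural splitting $\mathbb{R}^N=\mathrm{span}\{\mathbf{1}\}\oplus V_0$, where $V_0:=\{v:\sum_i\pi_i v_i=0\}$ is the zero-mean subspace and, by self-adjointness, the $\langle\cdot,\cdot\rangle_\pi$-orthogonal complement of the top eigenspace $\mathrm{span}\{\mathbf{1}\}$. Both summands are invariant under $\TepsN$, and the operator norm of the restriction $\TepsN|_{V_0}$ equals the second-largest eigenvalue modulus, which is strictly less than $1$; hence $\TepsN|_{V_0}$ is a strict contraction and $\mathbb{I}-\TepsN$ is invertible on $V_0$. The zero-mean solution is then produced as the unique fixed point in $V_0$ of the affine map $\Phi\mapsto\TepsN\Phi+b_0$, where $b_0$ is the $V_0$-component of the source $b$; equivalently $\Phi^{(\epsilon,N)}=\sum_{k\ge0}(\TepsN)^k b_0$, a convergent Neumann series, which simultaneously justifies the method of successive approximation asserted in the main text.

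Uniqueness is then immediate: if $\Phi_1,\Phi_2$ are two zero-mean solutions, their difference lies in $\ker(\mathbb{I}-\TepsN)=\mathrm{span}\{\mathbf{1}\}$, say $\Phi_1-\Phi_2=c\mathbf{1}$, and the zero-mean constraint forces $c\sum_i\pi_i=c=0$. The step I expect to require the most care is the treatment of the source term $b$: because the Markov matrices $T^{(s,N)}$ carry $s$-dependent stationary distributions, $b$ generally has a nonzero component along $\mathbf{1}$ (indeed $T^{(s,N)}\to\mathbb{I}$ as $s\downarrow0$, so the integrand near $s=0$ is $H^{(N)}$, whose $\pi$-mean need not vanish), and this component does not lie in the range $V_0$ of $\mathbb{I}-\TepsN$. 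I would handle this by projecting $b$ onto $V_0$ as above and observing that the discarded $\mathbf{1}$-component merely shifts $\phi$ by an additive constant, leaving the gain $\nabla\phi$ unchanged; this is exactly the purpose of the zero-mean normalization, and the finite-dimensional counterpart of fixing $\int\phi\ud\mu^\epsilon=0$ in Proposition~\ref{prop:Teps}. The only remaining quantitative input behind the contraction, namely that the second eigenvalue of $\TepsN$ stays bounded away from $1$, follows from primitivity for each fixed $N$ and is all that is needed here.
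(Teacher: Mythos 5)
Your proof is correct and takes essentially the same route as the paper's, which is a one-line appeal to the fact that $\TepsN$ is a stochastic matrix with strictly positive entries with probability $1$ ("the result follows" is precisely the Perron--Frobenius argument you spell out: simple eigenvalue $1$ with eigenvector $\mathbf 1$, strict contraction on the complementary invariant subspace, and the zero-mean constraint killing the residual constant). One detail you supply is genuinely valuable and is silently skipped by the paper: the equation $(I-\TepsN)\Phi = b$ is solvable only when $b$ lies in the range of $I-\TepsN$, i.e., only when the $\pi$-weighted mean of the source vanishes, which need not hold for the empirical $H^{(N)}$ even though $\int h\ud\mu=0$; your projection of $b$ onto $V_0$ (equivalently, centering $h$) is exactly the correction needed for the statement to be literally true, and it also explains why the successive-approximation iteration does not drift along $\mathbf 1$. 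A very minor quibble: the entries of $\TepsN$ are positive because the Gaussian kernel is everywhere positive, so the a.s.\ distinctness of the samples is not actually needed for that step.
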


\medskip

Once $\Phi^{(\epsilon,N)}$ is available, it is straightforward to extend it to
the entire domain.  For $x\in\mathbb{R}^d$,
\begin{equation}
\phiepsN(x) := 
\frac{\sum_{i=1}^N \kepsnormN(x,X^i)
  \Phi^{(\epsilon,N)}_i}{\sum_{i=1}^N\kepsnormN(x,X^i)} +
\int_0^{\epsilon}T^{(s,N)}h(x) \ud s.
\label{eqn:extension}
\end{equation}
By construction $\phiepsN(X^i) = \Phi^{(\epsilon,N)}_i$ for
$i=1,\hdots,N$. 
The extension is not necessary for filtering because one only needs to
solve for $\nabla\phi$ at $X^i$.  The formula for this is,
\begin{align*}
\frac{\partial \phi}{\partial x_l}(X^i) = \sum_{j=1}^N \left[\TepsN_{ij}\PhiepsN_j\left(X^j_l- \sum_{k=1}^N \TepsN_{ik}X^k_l\right)\right] 
\end{align*}
where $X^i_l \in \Re$ is the $l$-th component of $X^i \in \Re^d$. 

The algorithm is summarized in Algorithm \ref{alg:kernel}.  For
numerical purposes we use $\epsilon H^{N} \approx \int_0^\epsilon
T^{(s,N)}H^{(N)}\ud s$.  Also, the fixed-point
problem~\eqref{eq:fixed-dis} is conveniently solved using the method
of successive approximation.  In filtering, the initial guess is
readily available from the solution at the previous time-step.  

\begin{algorithm}
\caption{Kernel-based gain function approximation}
\begin{algorithmic}
\REQUIRE $\{X^i\}_{i=1}^N$, $H:=\{h(X^i)\}_{i=1}^N$, $\epsilon$ 
\ENSURE $\Phi:=\{\phi(X^i)\}_{i=1}^N$, $\{\nabla \phi(X^i)\}_{i=1}^N$ \medskip
\STATE Calculate $g_{ij}:=\exp(-|X^i-X^j|^2/4\epsilon)$ for $i,j=1$ to $N$.\medskip
\STATE Calculate $k_{ij}:=\frac{g_{ij}}{\sqrt{\sum_l g_{il}}\sqrt{\sum_l g_{jl}}}$ for $i,j=1$ to $N$.
\STATE Calculate $T_{ij}:=\frac{k_{ij}}{\sum_l k_{il}}$ for $i,j=1$ to $N$. \medskip
\STATE Solve $\Phi= T \Phi + \epsilon H$ for $\Phi$ (Successive approximation). \medskip
\STATE Calculate $\frac{\partial \phi}{\partial x_l}(X^i) = \sum_{j=1}^N \left[T_{ij}\Phi_j\left(X^j_l- \sum_{k=1}^N T_{ik}X^k_l\right)\right]$ for $l=1$ to $d$.
\end{algorithmic}
\label{alg:kernel}
\end{algorithm}

\newP{Convergence} 
The following is a summary of the approximations with the kernel-based method:
\begin{align*}
\text{Exact}:&~&\phi &= e^{\epsilon\Delta_\rho} \phi+ \int_0^\epsilon e^{s\Delta_\rho}h \ud s \\
\text{Kernel approx:}&~ &\phieps &= \Teps \phieps+ \int_0^\epsilon T^{(s)} h \ud s\\
\text{Empirical approx:}&~ &\phiepsN &= \TepsN \phiepsN + \int_0^{\epsilon}T^{(s,N)}h \ud s
\end{align*}
We break the convergence analysis into two steps. The first step
involves convergence of  $\phiepsN$ to $\phieps$ as $N \to
\infty$. The second step involves  convergence of $\phieps$  to $\phi$
as $\epsilon \to 0$.  The following theorem states the convergence
result for the first step; a sketch of the proof appears in
Appendix~\ref{app:conv_kernel}. 

\medskip
\begin{theorem}
Consider the empirical kernel approximation of the fixed-point equation~\eqref{eq:fixed1}.  Fix $\epsilon>0$.  Then, 
\begin{romannum}
\item There exists a unique (zero-mean) solution $\phieps$ for the perturbed
  fixed-point equation~\eqref{eq:fixed2}.
\item For any finite $N$, a unique (zero-mean) solution
  $\Phi^{(\epsilon,N)}$ for~\eqref{eq:fixed-dis} exists with
  probability 1. 
\end{romannum}
For a compact set $\Omega\subset \Re^d$,   
\begin{equation}
\lim_{N \to \infty}~ \sup_{x \in \Omega} | \phiepsN(x) - \phieps(x)| =
0,\quad \text{a.s}, 
\label{eq:conv_kernel}
\end{equation}
where $\phiepsN$ is the extension of the vector-valued solution
$\Phi^{(\epsilon,N)}$ (see~\eqref{eqn:extension}).\qed
\label{thm:TepsN}
\end{theorem}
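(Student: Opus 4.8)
The plan is to treat parts (i) and (ii) as already settled by Propositions~\ref{prop:Teps} and~\ref{prop:TepsN}, so that the substance of the theorem is the uniform convergence~\eqref{eq:conv_kernel}. Both the limit and the approximant are characterized as fixed points of the same algebraic form: writing $\Teps$ for the population operator~\eqref{eq:Teps} and $\TepsN$ for its Nadaraya--Watson empirical version (the function-valued extension of the matrix~\eqref{eq:TepsN} that appears in~\eqref{eqn:extension}), one has
\[
\phieps = \Teps\phieps + b, \qquad \phiepsN = \TepsN\phiepsN + b_N,
\]
with $b := \int_0^\epsilon T^{(s)}h\,\ud s$ and $b_N := \int_0^\epsilon T^{(s,N)}h\,\ud s$. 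Subtracting and inserting $\pm\,\TepsN\phieps$ yields the error recursion
\[
e_N = \TepsN e_N + r_N,
\]
where $e_N := \phiepsN - \phieps$ and $r_N := (\TepsN - \Teps)\phieps + (b_N - b)$. Crucially the residual $r_N$ involves the empirical operators applied only to the \emph{fixed} functions $\phieps$ and $h$. The proof thus reduces to two tasks: show $\infnorm{r_N}\to 0$ on $\Om$, and invert $(I-\TepsN)$ with a bound uniform in $N$.

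First I would establish the uniform law of large numbers for the kernel sums that drive everything. The Gaussian kernel $\geps(x,y)=(4\pi\epsilon)^{-d/2}\exp(-|x-y|^2/4\epsilon)$ is bounded and, for fixed $\epsilon$, Lipschitz in $x$ uniformly in $y$, so the class $\{\geps(x,\varble): x\in\Om\}$ is uniformly bounded and equicontinuous. A finite cover of $\Om$ together with the strong law then gives
\[
\sup_{x\in\Om}\Big|\frac1N\sum_{l}\geps(x,X^l) - \int \geps(x,y)\,\ud\mu(y)\Big|\to 0 \quad\text{a.s.},
\]
and likewise for the weighted sums with weight $f\in\{\,1,\phieps,h\,\}$. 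Since the population degree $\int\geps(x,y)\ud\mu(y)$ is continuous and bounded below on $\Om$, the normalizing factors in $\kepsnormN$ converge uniformly and stay bounded away from zero, so the ratios defining $\TepsN f$ converge uniformly to $\Teps f$ on $\Om$ for each fixed continuous $f$. Applying this with $f=\phieps$ and (after interchanging the integral over $s$ by dominated convergence) with $f=h$ gives $\infnorm{r_N}\to 0$ almost surely.

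The remaining and more delicate step is to pass from $\infnorm{r_N}\to 0$ to $\infnorm{e_N}\to 0$. Because $\TepsN$ is a row-stochastic (Markov) matrix it is only a \emph{non-strict} contraction in the sup-norm --- its top eigenvalue is $1$ with the constant vector as eigenvector --- so $(I-\TepsN)$ cannot be inverted with a uniform sup-norm bound on its whole domain. I would therefore split $e_N$ into its mean part (along the constant) and its fluctuation orthogonal to constants in the invariant measure of $\TepsN$. On the fluctuation subspace the spectral gap of $\TepsN$ is inherited, for $N$ large and almost surely, from the spectral gap of $\Teps$ (which in turn comes from that of $e^{t\Delta_\rho}$ via~\eqref{eq:poincare}); this furnishes a contraction constant bounded uniformly below $1$, so the fluctuation of $e_N$ is $O(\infnorm{r_N})$. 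The mean part is pinned down by the two zero-mean normalizations, $\int\phieps\ud\mueps=0$ and the discrete zero-mean condition on $\Phi^{(\epsilon,N)}$, whose defining measures converge as $N\to\infty$; hence the constant component of $e_N$ also vanishes. Combining the two components gives~\eqref{eq:conv_kernel}.

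I expect the main obstacle to be precisely this inversion of $(I-\TepsN)$ uniformly in $N$: one must show that the empirical Markov operators retain a spectral gap that does not degrade as $N\to\infty$, and reconcile the two distinct zero-mean normalizations so that the constant component of the error is controlled. A secondary technicality is the ``$U$-statistic'' dependence built into $\kepsnormN$, whose degree normalization at a particle $X^i$ depends on all of the samples; this is handled by first replacing the empirical degree with its population limit using the uniform law of large numbers of the second step, at the cost of a vanishing error, and only then invoking the ordinary strong law for the resulting sums.
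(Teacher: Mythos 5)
Your overall architecture matches the paper's: parts (i) and (ii) are delegated to Propositions~\ref{prop:Teps} and~\ref{prop:TepsN}, the error is written as a fixed point of the empirical operator driven by a residual that only involves the empirical operators acting on the \emph{fixed} functions $\phieps$ and $h$, the residual is killed by a uniform law of large numbers on the compact set $\Omega$ (equicontinuity of the Gaussian kernel, a finite cover, and the degree functions bounded below), and everything then hinges on inverting $(I-\TepsN)$ with a bound uniform in $N$. Up to and including the residual analysis, your argument is essentially the paper's.

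The divergence --- and the gap --- is in how that last inversion is justified. The paper invokes the collectively compact operator approximation theorem [Theorem~7.6.6 in~\cite{hutson2005}]: since $\Teps$ is compact with $\|\Teps\|<1$ on the Banach space $\FF$ of continuous zero-mean functions, the family $\{\TepsN\}$ is collectively compact, and $\TepsN\to\Teps$ pointwise in sup-norm, the theorem delivers boundedness of $(I-\TepsN)^{-1}$ for large $N$ and convergence of $(I-\TepsN)^{-1}h$ \emph{without} ever establishing a spectral gap for the empirical operators. You instead assert that ``the spectral gap of $\TepsN$ is inherited, for $N$ large and almost surely, from the spectral gap of $\Teps$.'' That assertion is precisely the hard content you are trying to prove, and it does not follow from anything you have established: pointwise (strong) convergence of operators does not imply convergence of spectra or uniform resolvent bounds, and a sequence of $N\times N$ positive stochastic matrices can perfectly well have sub-dominant eigenvalues accumulating at $1$ even when the matrices converge strongly to a strict contraction. (A Dobrushin-type argument also fails cheaply here, since the particles are sampled from a measure with unbounded support and the minimal row overlap of $\TepsN$ is not uniformly controlled.) The collective compactness framework exists exactly to bypass this: the operative estimate is that $\|(\TepsN-\Teps)\TepsN\|\to 0$ in operator norm even though $\|\TepsN-\Teps\|$ need not, and this is what yields the uniform bound on $(I-\TepsN)^{-1}$. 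You correctly flagged this inversion as the main obstacle; the proposal as written leaves it open, so the proof is incomplete at its critical step. Your secondary points --- reconciling the $\mueps$-mean-zero and discrete mean-zero normalizations, and de-coupling the $U$-statistic dependence in the empirical degree --- are legitimate technicalities that the paper's sketch also treats lightly, but they are not a substitute for the missing inversion argument.
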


\medskip

The convergence analysis for step 2, as $\epsilon \to 0$, is the
subject of ongoing work.  In this regard, it is shown
in~\cite{hein-consistency-2005} that for compactly supported functions $f \in C^3$,
\begin{equation*}
f(x) - \Teps f(x)= \epsilon\, \Delta_\rho f(x) + O(\epsilon^2).
\end{equation*}

\medskip

\begin{example}
Consider once more the bimodal distribution introduced
in~\ref{example:truesol}.  Figure~\ref{fig:GLsol} depicts the
kernel-based approximation of the gain function with $N=200$
particles and range of $\epsilon$.  The kernel-based avoids the Gibbs
phenomena observed with the Galerkin (see \Fig{fig:galerkin}).
Notably, the gain function is positive for any choice of $\epsilon$.     \qed
   
\begin{figure}
\centering
\includegraphics[width=\columnwidth]{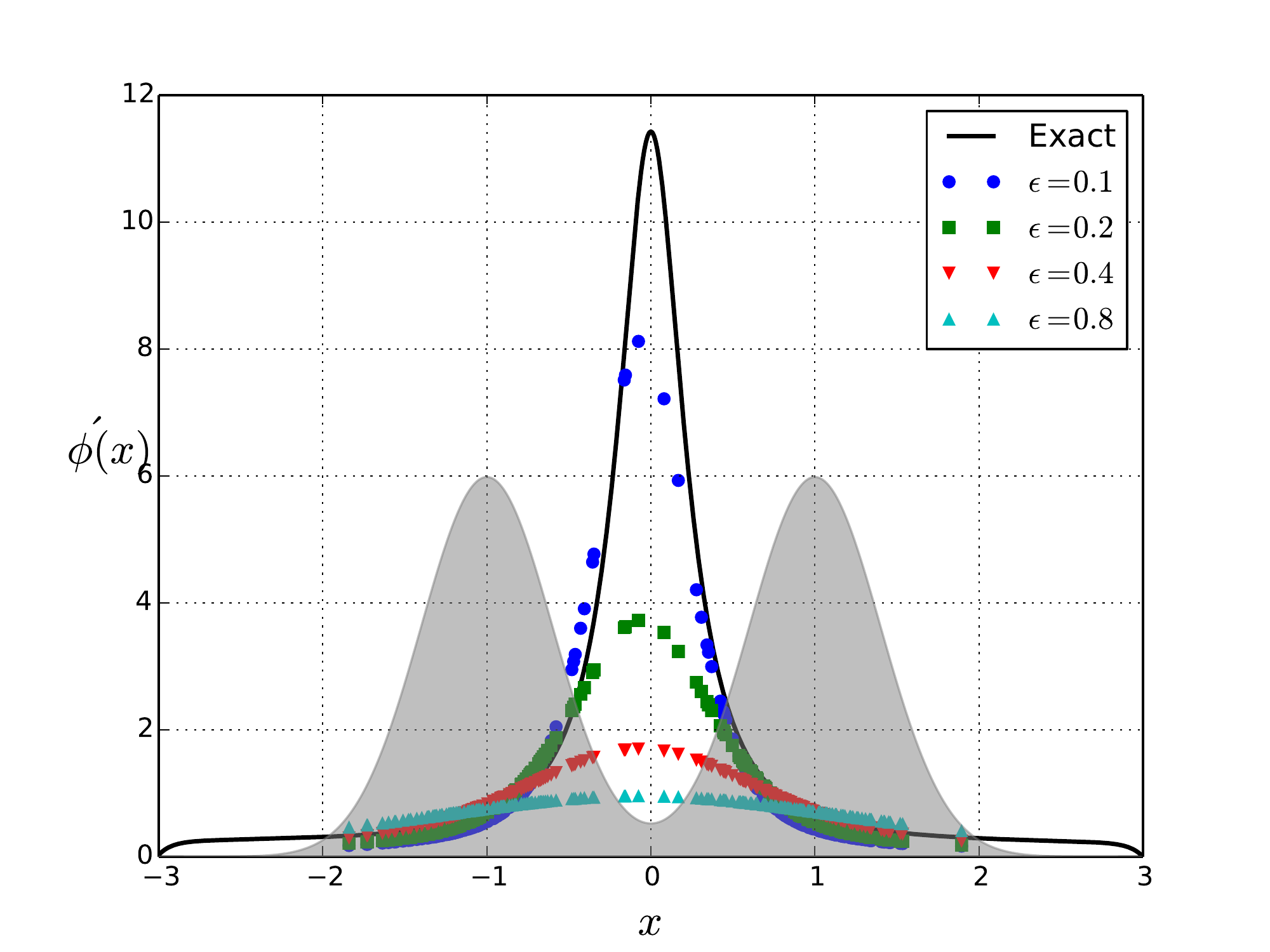}
\caption{Comparison of the exact solution and its kernel-based
  approximation with $\epsilon=0.1,0.2,0.4,0.8$ and $N=200$
  particles.  The density is depicted as the shaded curve in the background.}
\label{fig:GLsol}
\end{figure}
\end{example}

\begin{figure*}[t]
\centering
\begin{tabular}{cc}
\subfigure[]{ \includegraphics[width=1.0\columnwidth]{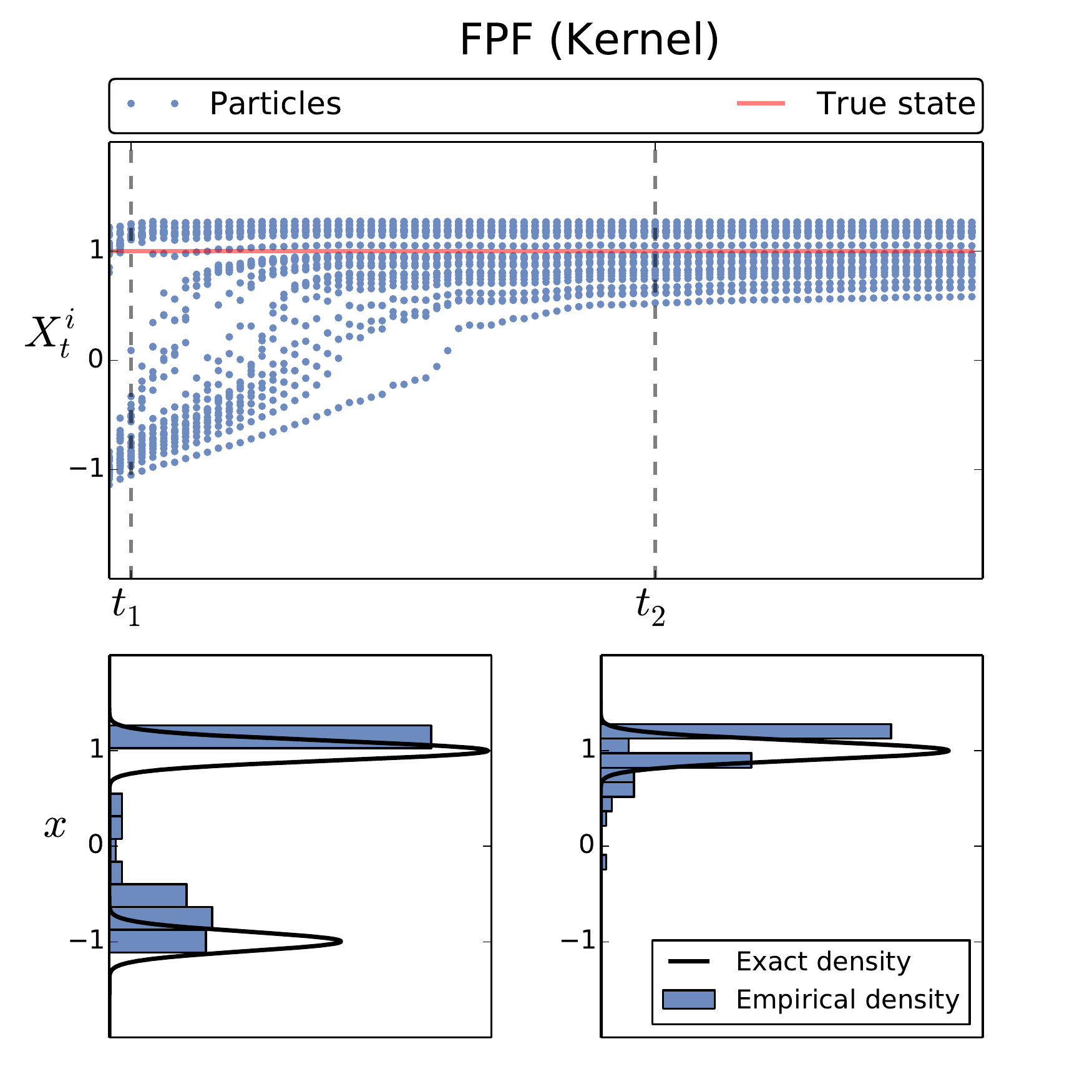}
\label{fig:Xi-Graph}} &
\subfigure[]{\includegraphics[width=1.0\columnwidth]{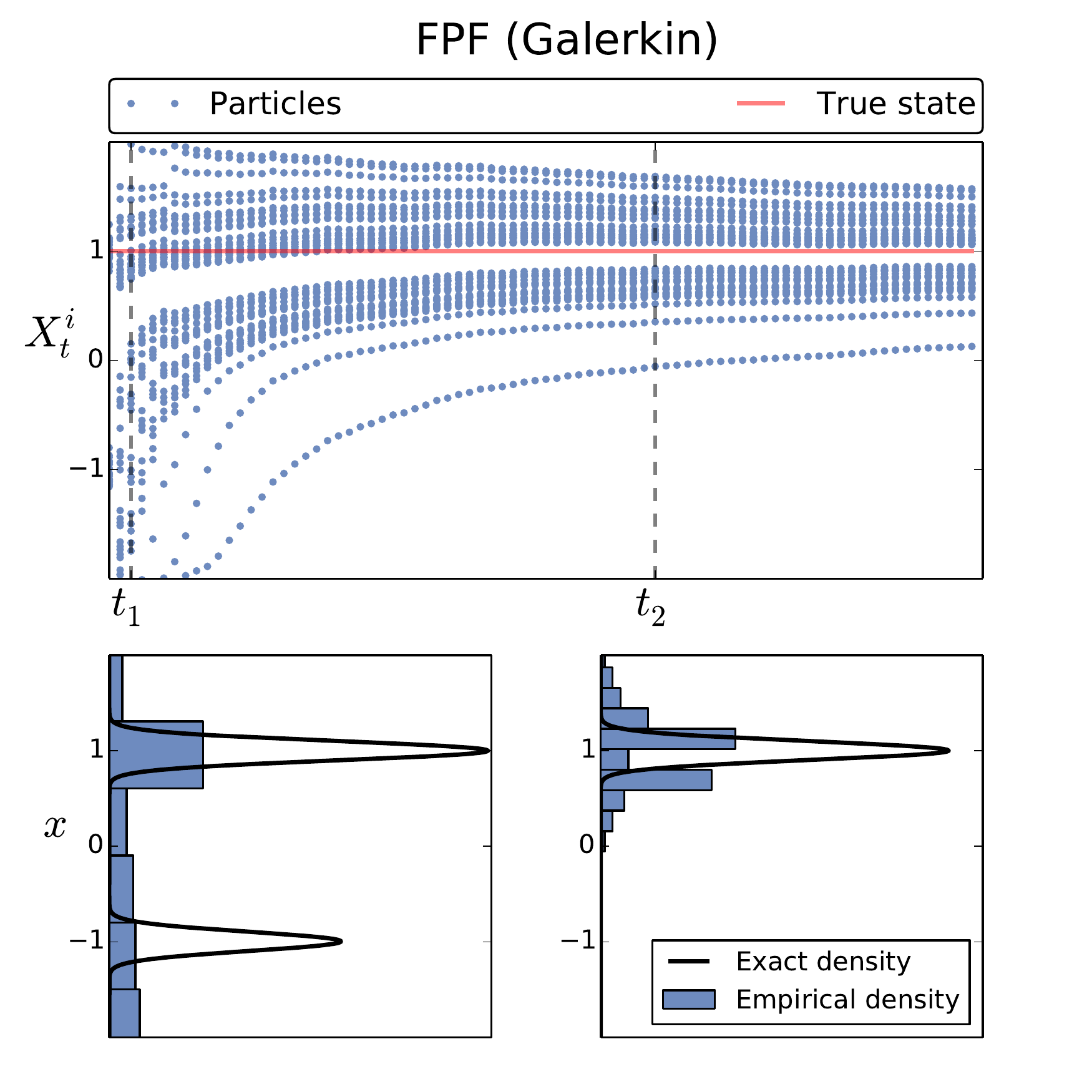}
\label{fig:Xi-Galerkin}} 
\end{tabular}
\caption{Comparison of simulation results: Trajectory of particles and posterior distribution with (a) Kernel-based approximation of the gain function, (b) Galerkin approximation of the gain function.}
\label{fig:sim-result}
\end{figure*}
\section{Numerics}
\label{sec:numerics}

In this section, we consider a filtering problem associated with the
bimodal distribution introduced in Example~\ref{example:truesol}.  
The filtering model is given by,  
\begin{align*}
\ud X_t &= 0,\\
\ud Z_t &= X_t\ud t + \sigma_w \ud W_t,
\end{align*} 
where $X_t \in \Re$, $Z_t \in \Re$, $W_t$ is a standard Wiener
process, the initial condition $X_0$ is sampled from the bimodal
distribution comprising of two Gaussians, $N(-1,\sigma^2)$ and
$N(+1,\sigma^2)$, and without loss of generality, $Z_0=0$.  As in Example~\ref{example:truesol}, the
observation function $h(x)=x$ is linear.  The static case is
considered because the posterior is given explicitly:
\begin{equation}
p^*(x,t) \doteq (\text{const.}) \exp \left( \frac{1}{2\sigma_W^2} h(x) \, Z_t - \frac{1}{4\sigma_W^2}
|h(x)|^2 \, t \right) p_0^*(x).
\label{eq:explicit-posterior}
\end{equation}          

The following filtering algorithms are implemented for comparison:
\begin{enumerate}
\item Kalman filter;
\item Feedback particle filter with the Galerkin approximation where
  $S=\text{span}\{x,x^2,x^3,x^4,x^5\}$;
\item Feedback particle filter with the kernel approximation.  
\end{enumerate}


The performance metrics are as follows:
\begin{enumerate}
\item Filter mean $\hat{X}_t$;
\item Conditional probability
  $\PP\left[|X_t-X_0|<\frac{1}{2}|\clZ_t\right]$.
\end{enumerate}

\medskip

The simulation parameters are as follows: The true initial state
$X_0=1$.  The measurement noise parameter $\sigma_W=0.3$.  The
simulation is carried out over a finite time-horizon $t\in[0,T]$ with
$T=0.8$ and a fixed discretized time-step $\Delta t = 0.02$.  All the
particle filters use $N=100$ particles and have the same
initialization, where particles are drawn with equal probability from
one of the two Gaussians, $N(-1,\sigma^2)$ or
$N(+1,\sigma^2)$, where $\sigma=0.1$.  For the kernel approximation,
we use $\epsilon=0.15$.  The simulation parameters are tabulated in
Table~I.  The Kalman filter is initialized with $\hat{X}_0 = 0 $ and
$\Sigma_0 =\text{Var}(X_0)=1+\sigma^2 $.  The latter corresponds to the variance of the prior.  
Figure~\ref{fig:sim-result}~parts (a) and (b) depict the particle
trajectories and the associated distributions obtained using the
kernel approximation and the Galerkin approximation, respectively.
The kernel-based approximation provides for a better approximation of
the exact posterior.  At time $t_1$ during the initial transients, some of the particles with the
Galerkin approximation show a divergence.  This is a numerical issue
due to the Gibb's phenomena that leads to erroneous negative value of
the gain (see the discussion in Examples~\ref{example:truesol}
and~~\ref{example:Galerkin}).

Figure~\ref{fig:sim-result-2} depicts a comparison of the simulation
results for the two metrics.  For the particle filters, these are
computed empirically. 

For applications of FPF with kernel-based approximation of the gain function for attitude estimation problem see the companion paper~\cite{ChiCDC2016}. 
\begin{figure*}[t]
\centering
\begin{tabular}{cc}
\subfigure[]{
\includegraphics[width=1.0\columnwidth]{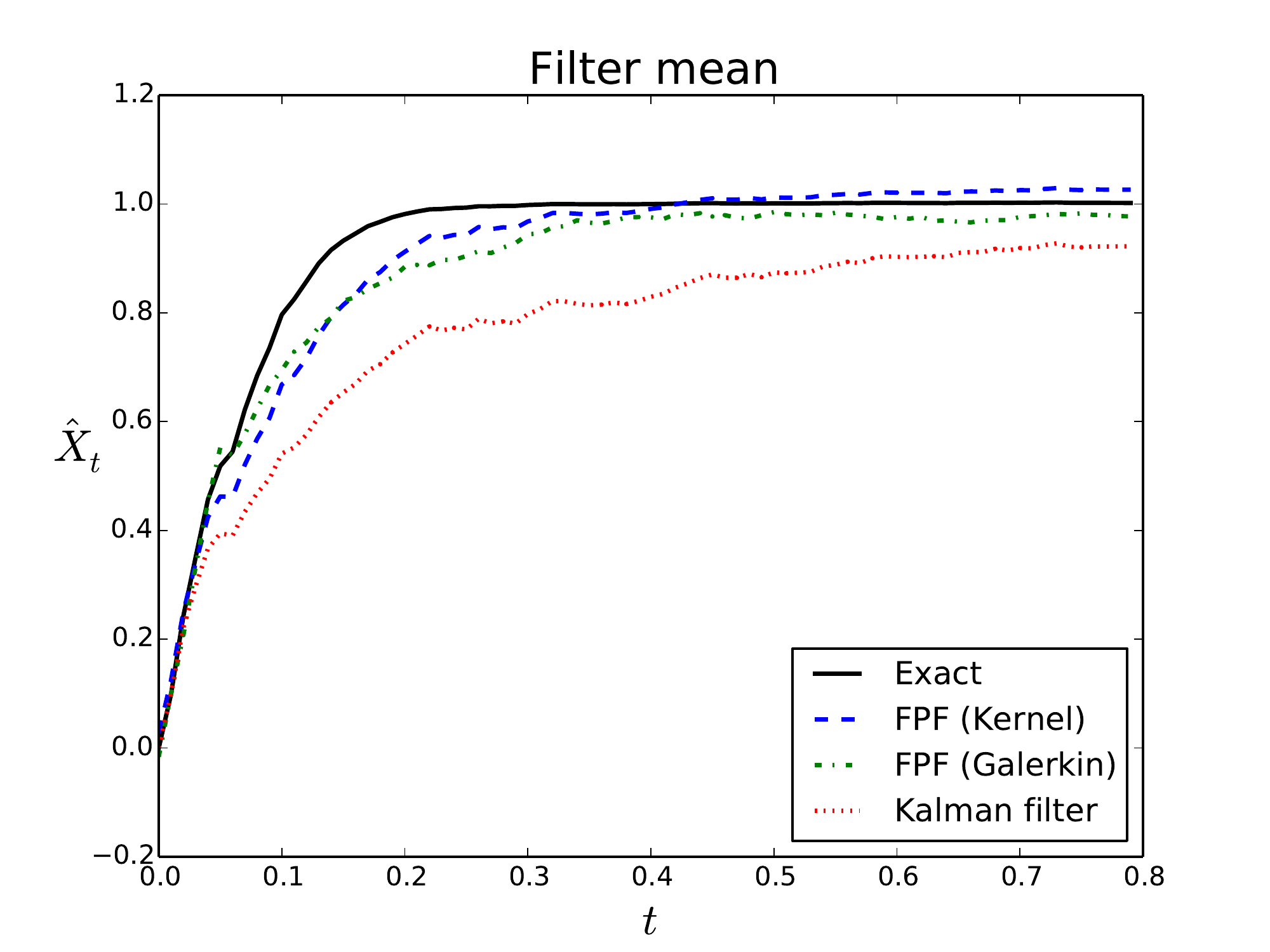}
\label{fig:mean}
} &
\subfigure[]{
\includegraphics[width=1.0\columnwidth]{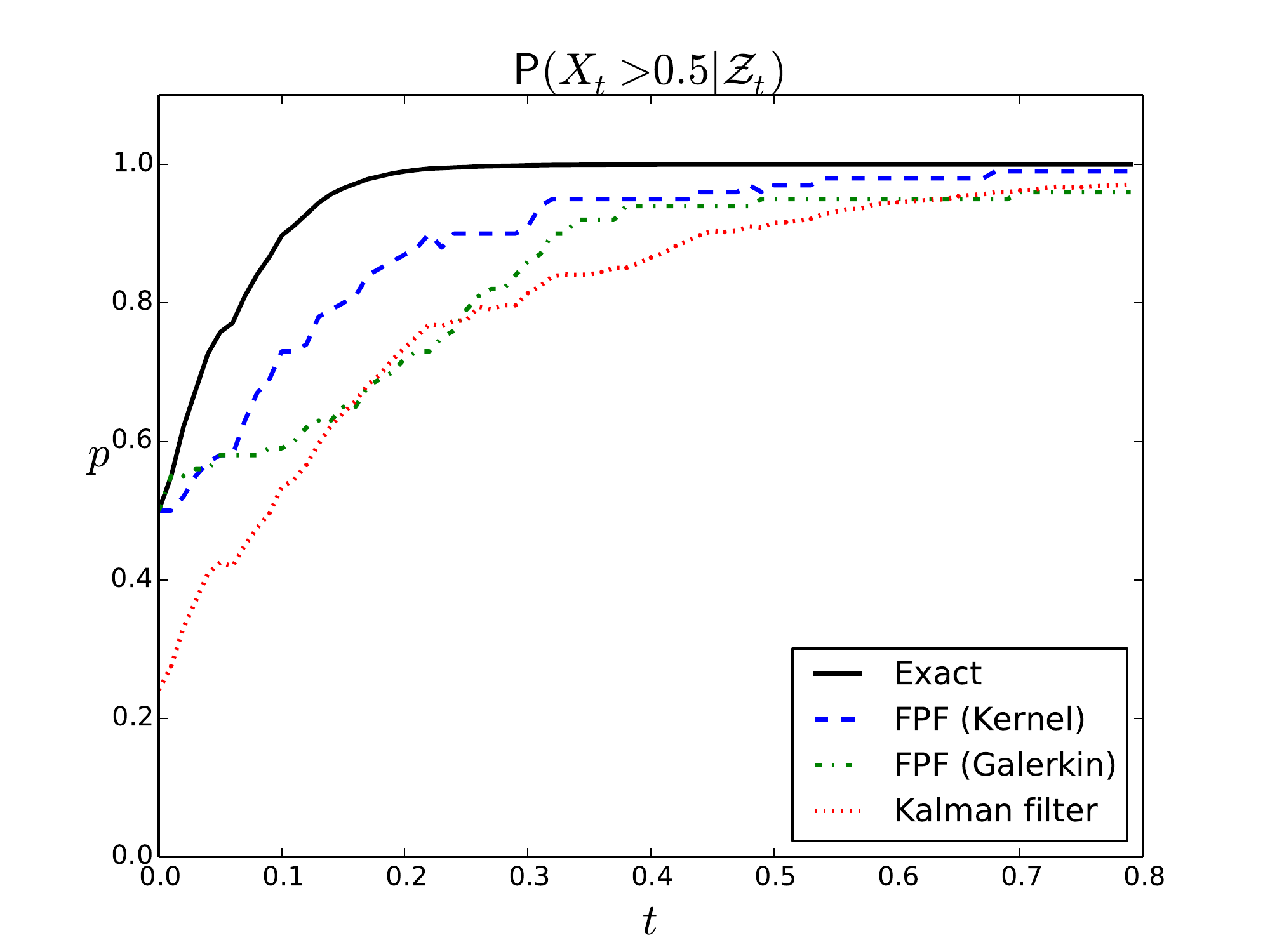}
\label{fig:pXgeq}
}
\end{tabular}
\caption{Comparison of simulation results: (a) Conditional mean of the state $X_t$,  (b) Conditional probability of $X_t$ larger than $\frac{1}{2}$.}
\label{fig:sim-result-2}
\end{figure*}
\bibliographystyle{plain}
\bibliography{fpfbib,ref}
\appendix
\subsection{Proof of Proposition \ref{prop:galerkin} }
\label{app:app1}

Using the spectral representation~\eqref{eq:spectral_rep}, because $h\in L^2$,
\[
\phi = - \Delta_{\rho}^{-1} h = \sum_{m=1}^{\infty} \frac{1}{\lambda_m}<e_m,h>e_m.
\]
With basis functions as eigenfunctions,
\[
\phi^{(M)} = \sum_{m=1}^{M} \frac{1}{\lambda_m}<e_m,h>e_m.
\]
Therefore,
\[
\| \phi - \phi^{(M)}\|^2_2 = \sum_{m=M+1}^\infty \frac{1}{\lambda_m^2}
|<e_m,h>|^2 \le \frac{1}{\lambda_M^2} \|h\|^2_2.
\]

\medskip

Next, by the triangle inequality,
\[
\|\phi-\phi^{(M,N)}\|_{2} \le \|\phi-\phi^{(M)}\|_2 + \|\phi^{(M)}-\phi^{(M,N)}\|_{2}. 
\]
We show $\|\phi^{(M)}-\phi^{(M,N)}\|_{2}\rightarrow 0$ a.s. as
$N\rightarrow\infty$.  

Using the formulae~\eqref{eq:phiM} and~\eqref{eq:phiMN} with
basis-functions $\psi_m$ as eigenfunctions $e_m$,
\[
 \|\phi^{(M)}-\phi^{(M,N)}\|_{2} = |c-c^{N}|,
\]
where $|\cdot|$ denotes the Euclidean norm in $\Re^M$.  The vectors
$c$ and $c^{(N)}$ solve the matrix equations
(see~\eqref{eqn:linmatrixeqn} and~\eqref{eqn:linmatrixeqnN}),
\begin{align*}
Ac &= b,\\
A^{(N)} c^{(N)} &= b^{(N)},
\end{align*}
where $A^{(N)} \stackrel{\text{a.s.}}{\longrightarrow} A$, $b^{(N)}
\stackrel{\text{a.s.}}{\longrightarrow} b$ by the strong law of large
numbers.  Consequently, because
$A=\text{diag}(\lambda_1,\ldots,\lambda_N)$ is invertible,
$c^{(N)}\rightarrow c$ a.s. as $N\rightarrow\infty$. 

\subsection{Proof of Proposition \ref{prop:Teps}}
\label{app:app2}
Denote $\neps(x):=\int \keps(x,y)\ud \mu(y)$, and re-write the operator $\Teps$ as,
\begin{equation*}
\Teps f(x) = \int \frac{\keps(x,y)}{\neps(x)\neps(y)} \, f(y) \; \ud \mueps(y),
\end{equation*}
where $\ud \mueps (x) := \neps(x) \ud \mu(x)$.

Denote $L^2(\mueps)$ as the space of square integrable functions with
respect to $\mueps$ and as before $L^2_0(\mueps) :=\big\{ \phi \in
L^2(\mueps) \big| \int \phi \ud \mueps=0\big\}$ is the co-dimension
$1$ subspace of mean-zero functions in $L^2(\mueps)$.  
The technical part of proving the Proposition is to show that the
operator $\Teps$ is a strict contraction on the subspace.  
 
\begin{lemma}
Suppose $\rho$, the density of the probability measure $\mu$,
satisfies Assumption A1. Then,
\begin{romannum}
\item $\mueps$ is a finite measure.
\item For sufficiently small values of $\epsilon$, the operator $\Teps:L^2(\mueps) \to
  L^2(\mueps)$ is a compact Markov operator with an invariant measure
  $\mueps$.
\item $\Teps:\Ltwoeps \to \Ltwoeps$ is a strict contraction.
\end{romannum}
\label{lemma:Teps}
\end{lemma}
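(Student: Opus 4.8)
The plan rests on one structural observation: the symmetric normalization built into $\keps$ makes $\Teps$ self-adjoint. Writing $d^{(\epsilon)}(x):=\int \geps(x,y)\ud\mu(y)$ so that $\keps(x,y)=\geps(x,y)/\sqrt{d^{(\epsilon)}(x)d^{(\epsilon)}(y)}$, and using the rewriting $\Teps f(x)=\int \tilde k(x,y) f(y)\ud\mueps(y)$ with $\tilde k(x,y):=\keps(x,y)/(\neps(x)\neps(y))$ and $\ud\mueps(x)=\neps(x)\ud\mu(x)$ introduced just above the Lemma, the kernel $\tilde k$ is manifestly symmetric in $(x,y)$. Hence $\Teps$ is self-adjoint on $L^2(\mueps)$. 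Part of claim (ii) then follows almost formally: $\Teps 1 = 1$ together with $\tilde k\ge 0$ gives the Markov property, and symmetry of $\tilde k$ with $\Teps 1=1$ gives $\int \Teps f\,\ud\mueps=\int f\,(\Teps 1)\,\ud\mueps=\int f\,\ud\mueps$, i.e.\ $\mueps$ is invariant.

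The analytic engine for everything else is a pair of uniform two-sided bounds, valid for all sufficiently small $\epsilon$: (a) $0<c_\epsilon\le \neps(x)\le C_\epsilon<\infty$ uniformly in $x$, and (b) $\rho/d^{(\epsilon)}$ bounded and integrable on $\Re^d$. To obtain these I would exploit Assumption A1, which (since $V$ is bounded) sandwiches $\rho$ between two Gaussian densities of the form $e^{-(x-\mu)^{\transpose}\Sigma^{-1}(x-\mu)}$. The convolution of the Gaussian kernel $\geps$ against a Gaussian is explicit, so $d^{(\epsilon)}$ inherits Gaussian upper and lower bounds with a slightly inflated covariance; in particular $d^{(\epsilon)}$ has strictly fatter tails than $\rho$, forcing $\rho/d^{(\epsilon)}\to 0$ at infinity and yielding (b). Feeding this into $\neps(x)=d^{(\epsilon)}(x)^{-1/2}\int \geps(x,y)\,\rho(y)\,d^{(\epsilon)}(y)^{-1/2}\ud y$, and using that Gaussian smoothing at scale $\epsilon$ perturbs a smooth density only by $O(\epsilon)$, gives $\neps\to 1$ with uniform bounds, which is (a). Claim (i) is then immediate, since $\mueps(\Re^d)=\int \neps\,\ud\mu\le C_\epsilon<\infty$.

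For compactness I would show $\Teps$ is Hilbert--Schmidt on $L^2(\mueps)$. Its Hilbert--Schmidt norm squared is
\[
\int\!\!\int \tilde k(x,y)^2\,\ud\mueps(x)\,\ud\mueps(y)
=\int\!\!\int \geps(x,y)^2\,w(x)w(y)\,\ud x\,\ud y,
\]
where $w:=\rho/(d^{(\epsilon)}\,\neps)$. By bounds (a)--(b), $w\in L^1\cap L^\infty$, and since $\geps(x,y)^2=(4\pi\epsilon)^{-d}\exp(-|x-y|^2/2\epsilon)$ has one-variable integral $(2\pi\epsilon)^{d/2}(4\pi\epsilon)^{-d}$, the double integral is at most $(2\pi\epsilon)^{d/2}(4\pi\epsilon)^{-d}\,\|w\|_\infty\|w\|_1<\infty$ for each fixed $\epsilon$. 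Hilbert--Schmidt implies compact, completing claim (ii). The cancellation here is essential: without the factor $\rho/d^{(\epsilon)}$ the integral diverges, so the non-compactness of $\Re^d$ is precisely what the tail decay of $w$ in (b) must absorb.

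Finally, for the strict contraction I would combine self-adjointness, compactness, and the strict positivity $\tilde k(x,y)>0$ for all $x,y$ (inherited from $\geps>0$). A compact self-adjoint Markov operator has real spectrum in $[-1,1]$ accumulating only at $0$, with $1$ an eigenvalue whose eigenfunction is the constant. Strict positivity of $\tilde k$ makes $\Teps$ positivity-improving, so by the Jentzsch/Krein--Rutman theorem the eigenvalue $1$ is simple and is the unique eigenvalue of modulus $1$ (ruling out $-1$ as well). Compactness then yields a genuine gap: the restriction of $\Teps$ to $\Ltwoeps=\{1\}^\perp$ has operator norm equal to the second-largest eigenvalue modulus $\lambda_2^{(\epsilon)}<1$, which is the asserted strict contraction. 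I expect the main obstacle to be the second step: the uniform \emph{pointwise} kernel estimates on the non-compact domain $\Re^d$. On a compact set these are routine, but controlling $\neps$ from below and $\rho/d^{(\epsilon)}$ at infinity simultaneously is delicate because both $d^{(\epsilon)}$ and $\rho$ decay; the Gaussian sandwich from A1 is what makes the competing exponential rates tractable, and obtaining the bounds uniformly in $x$ (rather than merely pointwise) is the crux on which both compactness and the spectral gap rest.
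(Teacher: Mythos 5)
Your overall architecture is sound and parts of it improve on the paper, but there is a genuine error at precisely the point you identify as the crux: the claimed uniform two-sided bound $0<c_\epsilon\le \neps(x)\le C_\epsilon$ and the assertion that $\neps\to 1$ uniformly. The lower bound is false on $\Re^d$. Writing $d^{(\epsilon)}=\geps\ast\rho$ and using the Gaussian sandwich from A1, one finds
\[
\neps(x)=\frac{1}{\sqrt{d^{(\epsilon)}(x)}}\int \geps(x,y)\,\frac{\rho(y)}{\sqrt{d^{(\epsilon)}(y)}}\,\ud y \;\asymp\; e^{-c\,\epsilon\,|x|^2}
\]
for some $c>0$ depending on $\Sigma$ (in the scalar case $\Sigma=\sigma^2$ the exponent is $\tfrac{2\epsilon}{\sigma^4}|x|^2$ to leading order), so $\neps$ vanishes at infinity. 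The reason is that Gaussian smoothing fattens the tails: $\rho/d^{(\epsilon)}\asymp e^{-4\epsilon|x|^2/\sigma^4}\to 0$, and the ``smoothing perturbs by $O(\epsilon)$'' heuristic holds only locally uniformly, not uniformly over the noncompact domain. This matters because your Hilbert--Schmidt estimate rests on ``$w:=\rho/(d^{(\epsilon)}\neps)\in L^1\cap L^\infty$ by (a)--(b)'', and (a) as stated is unavailable. The conclusion about $w$ is nevertheless true, but only because the decay rate of $\rho/d^{(\epsilon)}$ (order $4\epsilon$) strictly exceeds that of $1/\neps$ growth (order $2\epsilon$), leaving a net Gaussian decay of order $\epsilon$; this is exactly the paper's matrix $Q_3^{-1}=O(\epsilon)$ and is why the lemma is stated only for sufficiently small $\epsilon$ (the net exponent must remain positive definite). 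The fix is to carry out the explicit Gaussian-convolution bookkeeping for $\neps$ itself rather than invoking a uniform lower bound. Your claim (i) survives unchanged, since it needs only the upper bound on $\neps$, which does hold.

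Apart from this, your route for (i) and for the Markov/invariance/compactness claims coincides with the paper's (Gaussian sandwich plus Hilbert--Schmidt), while your proof of (iii) is genuinely different and arguably tighter: the paper derives $\|\Teps f\|<\|f\|$ for every nonzero $f\in\Ltwoeps$ from a quadratic-form identity and then asserts strict contractivity, a step that strictly speaking needs compactness or self-adjointness to upgrade a pointwise strict inequality to an operator norm below $1$; your self-adjoint, compact, positivity-improving (Jentzsch/Krein--Rutman) argument delivers the spectral gap $\|\Teps|_{\Ltwoeps}\|=\lambda_2^{(\epsilon)}<1$ directly and makes that upgrade explicit.
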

\begin{proof}

\noindent (i) WLOG assume $\mu=0$ in the Assumption A1. 
For notational ease, denote 
\[
\rhoeps(x):=\int
  \geps(x,y)\rho(y)\ud y,
\] 
where recall that $\rhoeps(x)$ is used to define the
  denominator of the kernel.  Then
\[
c_1 \exp(-\half x^T Q_1^{-1}x \; \leq\; \rhoeps(x) \; \leq \; c_2 \exp(-\half x^T
 Q_1^{-1}x),
\]
where $Q_1^{-1}:=(\Sigma + 2\epsilon I)^{-1}$ and $c_1=(2\pi)^{-\frac{d}{2}}|Q_1|^{-\half}e^{-\|V\|_\infty}$ and $c_2=(2\pi)^{-\frac{d}{2}}|Q_1|^{-\half}e^{\|V\|_\infty}$ are positive
constants that depend on $\|V\|_{\infty}$. 
Therefore,
\begin{align*}
\mueps (\Re^d)  &= \int \int \keps(x,y) \ud \mu(x) \ud \mu(y) \\
&= \int \int \frac{\geps(x,y)}{\sqrt{\rhoeps(x)}\sqrt{\rhoeps(y)}} \rho(x) \rho(y) \ud x \ud y\\
&\leq  \frac{1}{c_1} \int \int \geps(x,y) \, e^{-\half x^TQ_2^{-1}x} \, e^{-\half
  y^TQ_2^{-1}y} \; \ud x \ud y,
\end{align*}
which is bounded because $Q_2^{-1} := \Sigma^{-1} - \half Q_1^{-1}
\succ 0$. 
This proves that $\mueps$ is a finite measure. 

\noindent (ii) The integral operator $T^{\epsilon}$ is a Markov operator because the
kernel $k^{(\epsilon)}(x,y)>0$ and 
\[
\Teps 1 = \int \frac{\keps(x,y)}{\neps(x)\neps(y)}\ud \mueps(y) = \int \frac{\keps(x,y)}{\neps(x)}\ud \mu(y) = 1.
\]
$T^{\epsilon}$ is compact because [Theorem 7.2.7 in~\cite{hutson2005}],
\[
\int\int \left|\frac{{\keps}(x,y)}{{\neps}(x){\neps}(y)}\right|^{2} \ud
\mueps(x) \ud \mueps(y) < \infty.
\]
The inequality holds because the integrand can be bounded by a
Gaussian:
\[
\exp( - \frac{|x-y|^2}{2\epsilon} - \half x^TQ_3^{-1}x^T - \half y^TQ_3 y)
\]
where $Q_3^{-1}:=\Sigma^{-1} - \half Q_1^{-1}-(Q_2 + 2\epsilon I)^{-1}$ is of order $O(\epsilon)$.

Finally, the measure $\mu^{\epsilon}$ is an invariant measure because
for all functions $f$, 
\[
\int \Teps f(x) \; \ud \mueps (x) = \int f(y)\; \ud \mueps(y).
\]


\noindent (iii) Since $\mu^{(\epsilon)}$ is an invariant measure,
$\Teps:\Ltwoeps \to \Ltwoeps$.  Next, for all $f,\,g \in \Ltwoeps$,
\begin{align*}
&\half\int f(x)^2 \ud \mueps(x) + \half
\int g(y)^2 \ud \mueps(y) \\&- \int \int
\frac{\keps(x,y)}{\neps(x)\neps(y)} f(x)g(y)\ud \mueps(x) \ud
\mueps(y) \\
&=\int \int \frac{\keps(x,y)}{\neps(x)\neps(y)} (f(x)-g(y))^2 \ud
\mueps(x) \ud \mueps(y) \\&\geq 0,
\end{align*}
where, because the kernel is everywhere positive, the equality holds
iff
\begin{equation*}
f(x)=g(y)=\text{(const.)}\quad \mu^{\epsilon}-\text{a.e.}
\end{equation*}
 Therefore, substituting $g=\Teps f$ in the inequality above,
\begin{equation*}
\|\Teps f\|^2 \leq \frac{1}{2} \|f\|^2 + \frac{1}{2}\|\Teps f\|^2 \Rightarrow \\
\|\Teps f\| \leq \|f\|,
\end{equation*}
where the $L^2$ norms here are with respect to the invariant measure
$\mu^{\epsilon}$.  Now, for $f\in\Ltwoeps$, $\int f \ud \mueps =0$,
and thus the equality holds iff
\begin{equation*}
f(x)=\text{(const.)}=0.
\end{equation*}
Therefore, $\Teps$ is strictly
contractive on $\Ltwoeps$.
\end{proof}

\medskip

The proof of the Prop.~\ref{prop:Teps} now follows because $\Teps$ is a
contraction on  $\Ltwoeps$.  Note also that $\mu^{(\epsilon)}(A)
\rightarrow \mu(A)$ for any measurable set, as $\epsilon\downarrow
0$.  

\subsection{Proof of Proposition \ref{prop:TepsN}}
\label{app:finite_N}
By construction, $\TepsN$ is a $N \times N$ stochastic matrix whose
entries are all positive with probability $1$.  The result follows.
\subsection{Proof sketch for Theorem \ref{thm:TepsN}}
\label{app:conv_kernel}
Parts (i) and (ii) have already been proved as part of the
Prop.~\ref{prop:Teps} and Prop.~\ref{prop:TepsN}, respectively.
The convergence result~\eqref{eq:conv_kernel} leans on approximation
theory for integral operators. In [Theorem 7.6.6
in~\cite{hutson2005}], it is shown that if
\begin{romannum}
\item $\Teps$ is compact and $\TepsN$ is collectively compact;
\item $\|\Teps\|<1$;
\item $\TepsN$ converges to $\Teps$ pointwise, i.e
\begin{equation*}
\lim_{N \to \infty}\|\TepsN f - \Teps f\|_\infty =
0\quad\text{a.s},\quad \forall f 
\end{equation*} 
\end{romannum}
Then for $N$ large enough, $(I-\TepsN)^{-1}$ is bounded and 
\begin{equation*}
\lim_{N \to \infty}\|(I-\TepsN)^{-1} h  - (I-\Teps)^{-1}h \|_\infty = 0,\quad\text{a.s}\quad \forall h.
\end{equation*}

In order to prove the convergence result, we consider the Banach space
$\FF:=\{f \in C(\Omega); ~\int f \ud \mu(x)=0\}$ equipped with the
$\|\cdot\|_\infty$ norm.  We consider $\Teps$ and $\TepsN$ as linear
operators on $\FF$.  Note that $\TepsN$ here corresponds to the
extension of the vector-valued solution to $\Re^d$ using~\eqref{eqn:extension}.

The proof involves verification of each of the three requirements stated
above:
\begin{romannum}
\item Collective compactness follows because $\keps$ is continuous
  [Theorem 7.2.6~\cite{hutson2005}].
\item The norm condition holds because 
\begin{align*}
|\Teps f(x) | &\leq \frac{\int \kepsnorm(x,y)|f(y)|\ud \mu(y)}{\int \kepsnorm(x,y)\ud \mu(y)} \\
&\leq \frac{\int \kepsnorm(x,y)\ud \mu(y)}{\int \kepsnorm(x,y)\ud \mu(y)} \infnorm{f}\leq \infnorm{f}
\end{align*} 
where the equality holds only when $f$ is constant.  For $f\in\FF$,
this constant can only be $0$. 
\item Pointwise convergence follows from using the LLN.  For a fixed
  continuous function $f$, LLN implies convergence for every fixed
  $x\in\Re^d$.  On a compact set $\Omega$, pointwise convergence
  also implies uniform convergence.  
\end{romannum}
\qed



\end{document}